\documentclass[preprint]{elsarticle}
\usepackage{amsfonts,amssymb,amsmath}
\usepackage{algorithm}
\usepackage{algorithmic}

\newenvironment{proof}{\medskip                    
\noindent{\scshape Proof:}}{\quad $\square$
\medskip}  

\usepackage{hyperref}

\newtheorem{theorem}{Theorem}[section]
\newtheorem{lemma}[theorem]{Lemma}
\newtheorem{proposition}[theorem]{Proposition}
\newtheorem{corollary}[theorem]{Corollary}
\newtheorem{example}[theorem]{Example}
\newtheorem{remark}[theorem]{Remark}
\newtheorem{definition}[theorem]{Definition}


\newfont{\bb}{msbm10}

\def\b1{{\bf 1}}

\newcommand {\beq}{\begin{equation}}
\newcommand {\eeq} {\end{equation}}
%





\newcommand {\cP} {{\cal P}}


%


%

%

%

%

%

%
%
\newcommand {\R} {{\mathbb R}}

%

%

%

%

%


\def\C{{\rm C\kern-.48em\vrule width.06em height.6em depth-.02em
                 \kern.48em}}


\def\bce{\begin{center}}
\def\ece{\end{center}}


\if{
\documentclass[10pt]{article}
\usepackage{amsfonts,amssymb,amsmath}

\newenvironment{proof}{\medskip                    
\noindent{\scshape Proof:}}{\quad $\square$
\medskip}  

\usepackage{hyperref}

\newtheorem{theorem}{Theorem}[section]
\newtheorem{lemma}[theorem]{Lemma}
\newtheorem{proposition}[theorem]{Proposition}

\newtheorem{example}[theorem]{Example}
\newtheorem{remark}[theorem]{Remark}
\newtheorem{definition}[theorem]{Definition}

\usepackage{showkeys}
\newfont{\bb}{msbm10}

\defdiag{ {\rm diag}}

\def\b1{{\bf 1}}


\newcommand {\beq}{\begin{equation}}
\newcommand {\eeq} {\end{equation}}
%





\newcommand {\cP} {{\cal P}}


\newcommand {+} {\cP_{m}}
%


%

%

%

%

%

%
%
\newcommand {\R} {{\mathbb R}}

%

%

%

%

%


\def\C{{\rm C\kern-.48em\vrule width.06em height.6em depth-.02em
                 \kern.48em}}


\def\bce{\begin{center}}
\def\ece{\end{center}}

}\fi

\begin{document} 

\title{Bounding the row sum arithmetic mean by Perron roots of row-permuted matrices}

\author[rvt1]{Gernot Michael Engel}
\ead{engel@transversalnetworks.net}

\author[rvt2]{Serge{\u\i} Sergeev\corref{cor}}
\ead{s.sergeev@bham.ac.uk}

\address[rvt1]{Transversal Networks Corp., 2753 Marshall Parkway, Madison, WI 53713, USA}

\address[rvt2]{University of Birmingham, School of Mathematics, Edgbaston B15 2TT, UK}

\cortext[cor]{Corresponding author}



\begin{abstract}
\par

$\R_+^{n\times n}$ denotes the set of $n\times n$ non-negative matrices. For $A\in\R_+^{n\times n}$ let $\Omega(A)$ be the set of all matrices that can be formed by permuting the elements within each row of $A$. Formally: 
$$\Omega(A)=\{B\in\R_+^{n\times n}: \forall i\;\exists\text{ a permutation }\phi_i\;  \text{s.t.}\ b_{i,j}=a_{i,\phi_i(j)}\;\forall j\}.$$ 
For $B\in\Omega(A)$ let $\rho(B)$ denote the spectral radius or largest non negative eigenvalue of $B$. We show that the arithmetic mean of the row sums of $A$ is bounded by the maximum and minimum spectral radius of the matrices in $\Omega(A)$ Formally, we are showing that
$$\min_{B\in\Omega(A)}\rho(B)\leq \frac{1}{n}\sum_{i=1}^n\sum_{j=1}^n a_{i,j}\leq \max_{B\in\Omega(A)}\rho(B).$$
For positive $A$ we also obtain necessary and sufficient conditions for one of these inequalities (or, equivalently, both of them) to become an equality. We also give criteria which an irreducible matrix $C$ should satisfy to have $\rho(C)=\min_{B\in\Omega(A)} \rho(B)$ or $\rho(C)=\max_{B\in\Omega(A)} \rho(B)$. These criteria are used to derive algorithms for finding such $C$ when all the entries of $A$ are positive .
\end{abstract}

\begin{keyword}
 Perron root, row sums,  rearrangement inequality
\vskip0.1cm {\it{AMS Classification:}} 15A18 
\end{keyword}

\maketitle
\section{Introduction}
\label{s:intro}

 In what follows, $\R_+^n$ denotes the set of non-negative vectors with length $n$ and $\R_+^{n\times n}$ denotes the set of non-negative $n\times n$ matrices. For $x\in\R_+^n$ or, respectively, $A\in\R_+^{n\times n}$ we write $x>0$ or, respectively, $A>0$, if all entries of vector $x$ or matrix $A$ are positive. We will work with the following matrix set, which can be defined for any matrix 
 $A$.
\begin{definition}
  For $A\in\R^{n\times n}_+$, the matrix set $\Omega(A)$ consists of the {\em row-permuted} matrices, whose entries in each row are a permutation of entries in the corresponding row of $A$. Formally: 
	\begin{equation}
 \Omega(A)=\{B\in\R_+^{n\times n}: \forall i\;\exists\text{a permutation }\phi_i\;  \text{s.t.}\ b_{i,j}=a_{i,\phi_i(j)}\;\forall j\}.	\end{equation}
\end{definition}

We will use the following standard notation for the Perron roots of matrices.
\begin{definition}	
The Perron root (i.e. the largest non negative eigenvalue, or spectral radius) of a matrix $B\in\Omega(A)$ will be denoted by $\rho(B)$. 
\end{definition}

For $A\in\R_+^{n\times n}$ the following row sum inequality 
 $$ \min_{i=1}^n \sum_{j=1}^n a_{i,j} \leq \rho(A) \leq \max_{i=1}^n \sum_{j=1}^n a_{i,j} $$
 was first observed by Frobenius.
 The geometric means of the row sums as bounds for $\rho(A)$ were explored by Al'pin~\cite{Al} and Elsner and van Driessche~\cite{ED}, and further generalised by Engel et al.~\cite{ESS}.
 In this paper we are interested in establishing a different connection between Perron roots and row sums. Namely, we show that the  arithmetic mean of the row sums satisfies
	\begin{equation}
	\min_{B\in\Omega(A)}\rho(B)\leq \frac{1}{n}\sum_{i=1}^n\sum_{j=1}^n a_{i,j}\leq \max_{B\in\Omega(A)}\rho(B).
	\end{equation}
For $A>0$ we  obtain necessary and sufficient conditions for any of these inequalities to turn into equalities. For $A\in\R_+^{n\times n}$ we also give necessary and sufficient criteria for an irreducible matrix $C\in\Omega(A)$ to have $\rho(C)=\min_{B\in\Omega(A)}\rho(B)$ or $\rho(C)=\max_{B\in\Omega(A)}\rho(B)$.

To obtain these results we make use, in particular, of the following well-known facts. These facts, which we are going to use throughout the paper, are closely related to the famous Collatz-Wielandt inequality and are summarized in the following proposition:

\begin{proposition}[e.g., \cite{BP}, Theorem 1.11]
\label{BP28}
For $ A\in\R_+^{n\times n}$ and constants $\alpha>0,\beta>0$ and nonzero vector
 $x\in\R_{+}^n$ we have:
\begin{itemize}
   \item[{\rm (i)}] $\alpha x\leq Ax$\ implies $\alpha\leq\rho(A)$,
   \item[{\rm (ii)}] $Ax\leq \beta x$ with $x>0$ implies $\rho(A)\leq \beta$.
\end{itemize}
In addition, if $A$ is irreducible then the following implications hold:
\begin{itemize}
\item[{\rm (iii)}] if $\alpha x \leq A x$ and $\exists i$ such that $\alpha x_i< \sum_{j=1}^n a_{i,j}x_j$ then $\rho(A)>\alpha$, 
\item[{\rm (iv)}] if $A x\leq \beta x$ and $i$ such that $\sum_{j=1}^n a_{i,j}x_j<\beta x_i$ then $\rho(A)<\beta$.
\end{itemize}
\end{proposition}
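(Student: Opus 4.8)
The plan is to prove (i) and (ii) by an elementary iteration argument combined with Gelfand's spectral radius formula, and then to deduce the strict versions (iii) and (iv) from the Perron--Frobenius theorem for irreducible matrices.

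For (i), I would first record the \emph{monotonicity} of $A$: since $A\ge 0$, any $u\le v$ gives $Au\le Av$. Using this I would show by induction on $k$ that $\alpha^k x\le A^k x$ for all $k\ge 0$; the inductive step multiplies $\alpha^k x\le A^k x$ by $A$ and uses $\alpha x\le Ax$ to get $\alpha^{k+1}x\le\alpha^k Ax\le A^{k+1}x$. Fixing an index $i$ with $x_i>0$ (which exists because $x\neq 0$) and bounding $(A^kx)_i\le \nrminf{x}\,\nrminf{A^k}$, I would obtain $\alpha^k\le (\nrminf{x}/x_i)\,\nrminf{A^k}$. Taking $k$-th roots, letting $k\to\infty$, and invoking Gelfand's formula $\rho(A)=\lim_k \nrminf{A^k}^{1/k}$ then yields $\alpha\le\rho(A)$. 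The argument for (ii) is dual: from $Ax\le\beta x$ the same induction gives $A^kx\le\beta^k x$, and since now $x>0$ I can compare $A^k\b1$ with $A^k x$ componentwise to bound every row sum of $A^k$ by $\beta^k(\max_j x_j/\min_j x_j)$, i.e. $\nrminf{A^k}\le \beta^k(\max_j x_j/\min_j x_j)$; Gelfand's formula again gives $\rho(A)\le\beta$.

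For the strict statements I would use a strictly positive left Perron eigenvector. By the Perron--Frobenius theorem, an irreducible $A$ admits $y>0$ with $y^TA=\rho(A)y^T$. For (iii), set $w=Ax-\alpha x$; by hypothesis $w\ge 0$ and $w_i>0$, so $y^Tw>0$ because $y>0$. On the other hand $y^Tw=(y^TA)x-\alpha y^Tx=(\rho(A)-\alpha)\,y^Tx$, and $y^Tx>0$ since $y>0$ and $x\ge 0$ is nonzero. Hence $(\rho(A)-\alpha)\,y^Tx>0$ forces $\rho(A)>\alpha$. Statement (iv) follows by the identical computation applied to $w=\beta x-Ax\ge 0$ with $w_i>0$, giving $(\beta-\rho(A))\,y^Tx>0$ and therefore $\rho(A)<\beta$.

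The only real obstacle is handling the possibly \emph{reducible} case in (i) and (ii), where no positive eigenvector need exist; the iteration-plus-Gelfand argument is precisely what circumvents this, since it never appeals to an eigenvector. (Alternatively one could perturb $A$ to $A+\epsilon\b1\b1^T>0$, apply the irreducible case, and pass to the limit using continuity of $\rho$, but the direct estimate is cleaner.) Everything else is routine: the inductions and the norm comparisons use only that $A$, $x$, $\alpha$ and $\beta$ are nonnegative, and in (ii) that $x>0$ so that $\min_j x_j>0$.
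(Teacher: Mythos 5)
Your proposal is correct in all four parts, but note that the paper itself gives no proof of this proposition: it is quoted as a known result (citing Berman--Plemmons, Theorem 1.11), so any complete argument is necessarily your own. Your handling of (iii) and (iv) --- pairing the slack vector $w=Ax-\alpha x$ (resp.\ $w=\beta x - Ax$) against a strictly positive left Perron eigenvector $y$ of the irreducible matrix $A$ and reading off $(\rho(A)-\alpha)\,y^Tx = y^Tw > 0$ --- is essentially the classical duality argument and is how these strict inequalities are usually derived; it is watertight, since $y>0$ exists by Perron--Frobenius applied to $A^T$ and $y^Tx>0$ because $x\geq 0$ is nonzero. Where you genuinely diverge from the textbook route is in (i) and (ii): the iteration $\alpha^k x\leq A^k x$ (resp.\ $A^k x\leq \beta^k x$) combined with Gelfand's formula for the induced $\infty$-norm is a more elementary, eigenvector-free argument, and, as you observe, this is exactly what lets you treat reducible $A$ directly rather than perturbing to a positive matrix and invoking continuity of $\rho$, which is what a purely Perron--Frobenius-based proof (or the paper's own $\epsilon$-perturbation technique in Theorem 3.1) would require. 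The hypothesis $x>0$ enters your proof of (ii) precisely where it must, to control $\min_j x_j$; this correctly mirrors the fact that (ii) fails without it, e.g.\ $A=\mathrm{diag}(2,1)$ with $x=(0,1)^T$ satisfies $Ax\leq x$ but $\rho(A)=2$. One pedantic point: in (i), if $\|A^k\|_\infty=0$ for some $k$ the $k$-th-root step degenerates, but then $\alpha^k x_i\leq 0$ already contradicts $\alpha>0$, $x_i>0$, so the hypothesis is vacuous in that case and nothing breaks.
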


The next result, which we will use to derive the criteria for $\rho(C)=\max_{B\in\Omega(A)} \rho(B)$ and 
$\rho(C)=\min_{B\in\Omega(A)} \rho(B)$, is known as the {\em rearrangement inequality.}

\begin{proposition}[e.g., \cite{HLP}, page 261]
\label{p:rearrange}
Let $x,y\in\R_+^n$ be such that $x_1\leq x_2\leq \ldots\leq x_n$ and $y_1\leq y_2\leq\ldots\leq y_n$, and let 
$\phi\colon\{1,\ldots,n\}\to\{1,\ldots,n\}$ be an arbitrary permutation. Then the following inequalities hold:
$$
\sum_{i=1}^n x_iy_{n+1-i}\leq \sum_{i=1}^n x_iy_{\phi(i)}\leq \sum_{i=1}^n x_iy_i.
$$
\end{proposition}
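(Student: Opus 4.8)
The plan is to establish the two inequalities separately, deriving the lower one from the upper one by a sign-reversal trick, and to prove the upper bound $\sum_{i=1}^n x_iy_{\phi(i)}\leq \sum_{i=1}^n x_iy_i$ by the classical pairwise exchange (``bubble-sort'') argument. First I would prove the upper bound. Call a pair of indices $p<q$ an \emph{inversion} of $\phi$ if $\phi(p)>\phi(q)$. If $\phi$ is not the identity it has at least one inversion, so I would fix such a pair $p<q$ and let $\phi'$ be the permutation obtained from $\phi$ by swapping the two values, i.e.\ $\phi'(p)=\phi(q)$, $\phi'(q)=\phi(p)$, and $\phi'=\phi$ otherwise.

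The two sums differ only in the terms indexed by $p$ and $q$, and a direct computation gives
$$\sum_{i=1}^n x_iy_{\phi'(i)}-\sum_{i=1}^n x_iy_{\phi(i)}=(x_q-x_p)\,(y_{\phi(p)}-y_{\phi(q)}).$$
Since $p<q$ we have $x_q\geq x_p$, and since $\phi(p)>\phi(q)$ we have $y_{\phi(p)}\geq y_{\phi(q)}$; hence the right-hand side is nonnegative, so removing an inversion by this swap does not decrease the sum. Because each such swap strictly decreases the (finite) number of inversions, after finitely many swaps $\phi$ is transformed into the identity without the sum ever decreasing, which yields $\sum_{i=1}^n x_iy_{\phi(i)}\leq \sum_{i=1}^n x_iy_i$.

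For the lower bound I would reduce to the case just proved. Set $w_i:=-y_{n+1-i}$ for $i=1,\ldots,n$; since $y$ is nondecreasing, $w$ is nondecreasing as well, so the upper bound applies to the pair $(x,w)$: for every permutation $\psi$ one has $\sum_{i=1}^n x_iw_{\psi(i)}\leq \sum_{i=1}^n x_iw_i$. Writing $w_{\psi(i)}=-y_{n+1-\psi(i)}$ and putting $\phi(i):=n+1-\psi(i)$, this becomes $-\sum_{i=1}^n x_iy_{\phi(i)}\leq -\sum_{i=1}^n x_iy_{n+1-i}$, i.e.\ $\sum_{i=1}^n x_iy_{n+1-i}\leq \sum_{i=1}^n x_iy_{\phi(i)}$, as required.

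I do not expect a serious obstacle, as this is a classical inequality; the only points needing care are verifying that the exchange process terminates (which follows from the strict decrease of the inversion count), and checking that the assignment $\psi\mapsto\phi$ with $\phi(i)=n+1-\psi(i)$ is a bijection of the symmetric group, so that quantifying over all $\phi$ is the same as quantifying over all $\psi$. Ties among the $x_i$ or $y_j$ cause no difficulty, since all the inequalities involved are non-strict.
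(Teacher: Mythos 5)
Your proof is correct, but note that there is no in-paper proof to compare against: the paper states this proposition as a classical fact with a citation to Hardy, Littlewood and P\'olya and never proves it, whereas you give a self-contained argument. Your exchange (``bubble-sort'') proof is the standard one, and the details check out: the computation $\sum_{i} x_iy_{\phi'(i)}-\sum_{i} x_iy_{\phi(i)}=(x_q-x_p)(y_{\phi(p)}-y_{\phi(q)})$ is right, and your termination claim --- that swapping the values at an inverted pair $p<q$ (not necessarily adjacent) strictly decreases the inversion count --- is a true combinatorial fact worth a line of justification: the swap kills the inversion at $(p,q)$, does not affect pairs with an index outside $\{p,\dots,q\}$, and for each intermediate position $r$ the combined contribution of the pairs $(p,r)$ and $(r,q)$ changes by $0$ or $-2$, so the total count drops by an odd positive number. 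One point of hygiene in your lower-bound reduction: the sequence $w_i=-y_{n+1-i}$ generally has negative entries, so it fails the hypothesis $w\in\R_+^n$ of the proposition as stated, and you cannot invoke the proposition verbatim for the pair $(x,w)$. This is harmless --- your exchange argument never uses nonnegativity, only that both sequences are nondecreasing --- but you should say explicitly that you are applying the upper bound in this slightly stronger form (valid for arbitrary real nondecreasing $y$). The bijectivity of $\psi\mapsto\phi$ with $\phi(i)=n+1-\psi(i)$, which you rightly flag, is immediate since it is composition with the fixed reversal permutation. What your route buys over the paper's citation is an elementary, fully verifiable proof that moreover makes visible exactly which hypotheses matter (monotonicity, not nonnegativity), which is precisely the feature your sign-reversal trick exploits.
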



\section{Preliminary lemmas}
 \label{s:lemmas}

The following lemma, related to the rearrangement inequality, establishes that the maximum Perron root is achieved on a matrix with all positive entries for which the correlation between the order of the components of its Perron eigenvector and each of its row vectors is maximized. The minimum Perron root is achieved when the correlation between the order of the components of its Perron eigenvector and each of its row vectors is minimized.

\begin{lemma}
\label{primary tool}
Let $A\in\R_+^{n\times n}$ be irreducible. Then the following implications hold:
\begin{itemize}
\item[{\rm (i)}] 
if $\rho(A)=\max_{B\in\Omega(A)}\rho(B)$ and $x$ is a Perron eigenvector of $A$
then for $1\leq i,j,k\leq n$:
$$x_k < x_j \text{ implies }a_{i,k}\leq a_{i,j},$$
  \item[{\rm (ii)}]
	if $\rho(A)=\min_{B\in\Omega(A)}\rho(B)$ and $x$ is a Perron eigenvector of $A$
then for $1\leq i,j,k\leq n$:

$$x_k < x_j \text{ implies }a_{i,k}\geq a_{i,j}.$$
\end{itemize}
\end{lemma}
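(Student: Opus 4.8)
The plan is to prove both implications by contradiction, in each case perturbing $A$ by a single transposition inside one row and using the maximality (resp.\ minimality) hypothesis together with Proposition~\ref{BP28} to pin down the spectral radius of the perturbed matrix, before extracting a contradiction with irreducibility. The single swap is exactly the pairwise exchange underlying the rearrangement inequality, which is why only Proposition~\ref{BP28} is needed here.

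Concretely, for part (i) I assume the conclusion fails: there are indices $i,j,k$ with $x_k<x_j$ but $a_{i,k}>a_{i,j}$. Since $A$ is irreducible, its Perron eigenvector $x$ is strictly positive. Let $B\in\Omega(A)$ be obtained from $A$ by interchanging the entries in positions $(i,j)$ and $(i,k)$ of row $i$ and leaving every other row untouched. A direct computation then gives $Bx-\rho(A)x=u$, where $u$ vanishes in every coordinate except the $i$-th, in which $u_i=(a_{i,k}-a_{i,j})(x_j-x_k)>0$; that is, $Bx\geq \rho(A)x$ with strict inequality in coordinate $i$. Proposition~\ref{BP28}(i) yields $\rho(B)\geq\rho(A)$, and the maximality hypothesis gives $\rho(B)\leq\rho(A)$, so $\rho(B)=\rho(A)$. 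For part (ii) the same swap produces $u_i=(a_{i,k}-a_{i,j})(x_j-x_k)<0$, so $Bx\leq\rho(A)x$ with strict inequality in coordinate $i$; Proposition~\ref{BP28}(ii) and the minimality hypothesis again force $\rho(B)=\rho(A)$.

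The main obstacle is that the transposition may destroy irreducibility, so Proposition~\ref{BP28}(iii)--(iv) cannot be applied to $B$ directly to reach the desired strict inequality. To get around this I would argue with a nonnegative left Perron eigenvector of $B$: since $B\geq 0$ and $\rho(B)=\rho(A)=:\rho$, Perron--Frobenius theory for nonnegative matrices provides $v\geq 0$, $v\neq 0$, with $v^{T}B=\rho v^{T}$. Pairing $v$ with the identity $Bx-\rho x=u$ gives $0=v^{T}(Bx-\rho x)=u_i v_i$, and since $u_i\neq 0$ this forces $v_i=0$.

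Finally I would turn the vanishing of $v_i$ into a contradiction with irreducibility. Let $P=\{p:v_p>0\}$ and $Z=\{p:v_p=0\}$; then $P\neq\emptyset$ while $i\in Z$, so $P$ is a nonempty proper subset of $\{1,\dots,n\}$. Reading off the $p$-th coordinate of $v^{T}B=\rho v^{T}$ for $p\in Z$ gives $\sum_q v_q b_{q,p}=0$, whence $b_{q,p}=0$ whenever $q\in P$ and $p\in Z$. Because $i\in Z$, no index of $P$ equals $i$, so the rows indexed by $P$ are identical in $A$ and in $B$; thus $a_{q,p}=0$ for all $q\in P$, $p\in Z$ as well. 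This exhibits $P$ as a nonempty proper set with no edges leaving it in the digraph of $A$, contradicting the irreducibility of $A$. Steps three and four are identical for part (ii), the only change being the sign of $u_i$ recorded above. The step I expect to require the most care is the passage through the possibly reducible matrix $B$; once the left-eigenvector support argument is in place, the remainder is routine.
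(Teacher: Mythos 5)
Your proof is correct, and its opening move coincides with the paper's: negate the monotonicity, swap $a_{i,j}$ and $a_{i,k}$ within row $i$ to obtain $B\in\Omega(A)$ with $Bx\geq\rho(A)x$ (resp.\ $Bx\leq\rho(A)x$) and strict inequality exactly in coordinate $i$, where $u_i=(a_{i,k}-a_{i,j})(x_j-x_k)$. Where you genuinely diverge is at the decisive step. The paper concludes at once that $\rho(B)>\rho(A)$ (resp.\ $\rho(B)<\rho(A)$) by applying Proposition~\ref{BP28} parts (iii)--(iv) to $B$; but those parts are stated only for irreducible matrices, and a swap involving a zero entry --- possible here, since the lemma assumes only that $A\geq 0$ is irreducible --- can destroy the irreducibility of $B$, so the paper's invocation is, strictly speaking, unjustified in general (it is harmless when both swapped entries are positive, in particular for $A>0$, which covers the paper's main application). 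You identify precisely this obstacle and route around it: parts (i)--(ii) of Proposition~\ref{BP28}, which require no irreducibility, together with the optimality of $\rho(A)$ force $\rho(B)=\rho(A)=:\rho$; then a nonnegative left Perron eigenvector $v$ of $B$ gives $v_iu_i=v^{T}(Bx-\rho x)=0$, hence $v_i=0$, and your support argument ($b_{q,p}=0$, hence $a_{q,p}=0$, for $q$ in the support of $v$ and $p$ outside it, legitimate because row $i$ is the only altered row and $i$ lies outside the support) contradicts the irreducibility of $A$ itself rather than that of $B$. Your version is somewhat longer, but it actually repairs a small gap in the published argument; the paper's route buys brevity at the cost of the unverified hypothesis that the swapped matrix remains irreducible.
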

\begin{proof} (i): Assume that $A\in\R_+^{n\times n}$ is irreducible and  $\rho(A)=\max_{B\in\Omega(A)}\rho(B)$ . Then $\exists x>0$ such that $Ax=\rho(A)x$. By contradiction, assume that there exist $i,j,k$ such that $x_j>x_k$ but $a_{i,j}<a_{i,k}$. Let $B$ be the matrix formed by swapping the two entries $a_{i,j}$ and $a_{i,k}$ so that $b_{i,j}=a_{i,k}$ and $b_{i,k}=a_{i,j},$ with all other entries of $B$ equal to the entries of $A$. Then
 $B$ is in $\Omega(A)$. We have  $\rho(A)x_i= \sum_{j=1}^n a_{i,j}x_j<\sum_{j=1}^n b_{i,j}x_j$  and $\sum_{j=1}^n a_{s,j}x_j=\sum_{j=1}^n b_{s,j}x_j$ for $s\neq i$. Thus by Proposition \ref{BP28} part (iii), $\rho(B)>\rho(A)$. Since this contradicts that  $\rho(A)=\max_{B\in\Omega(A)}\rho(B)$ it follows that  for $1\leq i,j,k \leq n\ x_k < x_j$ implies $a_{i,k}\leq a_{i,j}$ and (i) is established.

(ii): The proof is similar to the previous part, with the difference that here we assume that $\rho(A)=\min_{B\in\Omega(A)}\rho(B)$. 
Upon assuming by contradiction that there exist $i,j,k$ such that $x_j>x_k$ but $a_{i,j}>a_{i,k}$ we define matrix $B$ by swapping the entries $a_{i,j}$ and $a_{i,k}$ so that $b_{i,j}=a_{i,k}$ and $b_{i,k}=a_{i,j},$ with all other entries of $B$ equal to the entries of $A$. Observing that $\rho(A)x_i= \sum_{j=1}^n a_{i,j}x_j>\sum_{j=1}^n b_{i,j}x_j$  and  
$\sum_{j=1}^n a_{s,j}x_j=\sum_{j=1}^n b_{s,j}x_j$ for $s\neq i$, we use  Proposition \ref{BP28} part (iv) to obtain $\rho(B)<\rho(A)$,
a contradiction establishing part (ii). \end{proof}


Proof of the next lemma follows the reasoning used in the proof of Tchebychef's inequality \cite{HLP} page 43. 
\begin{lemma}
\label{Tchebychef}
Let $A\in\R_+^{n\times n}$ have a Perron eigenvector $x\in\R_+^n$  satisfying $\sum_{i=1}^n x_i=1$. Then the 
following properties hold:
\begin{itemize}
\item[{\rm (i)}] if $\forall i,j,k\ x_k < x_j$ implies $a_{i,k}\leq a_{i,j}$ then 
$$\forall i\ \frac{\sum_{j=1}^n a_{i,j}}{n}\leq \sum_{j=1}^n a_{i,j}x_j=\rho(A)x_i,$$

\item[{\rm (ii)}] if $\forall i,j,k\ x_k < x_j$ implies $a_{i,k}\geq a_{i,j}$ then 
$$\forall i\ \frac{\sum_{j=1}^n a_{i,j}}{n}\geq \sum_{j=1}^n a_{i,j}x_j=\rho(A)x_i,$$

\item[{\rm (iii)}] if $\forall i,j,k\ x_k < x_j$ implies $a_{i,k}\leq a_{i,j}$  or $\forall i,j,k\ x_k < x_j$ implies $a_{i,k}\geq a_{i,j}$,  then  the following are equivalent:\\
\begin{itemize}
\item[{\rm (a)}] $\frac{\sum_{j=1}^n a_{i,j}}{n} =\sum_{j=1}^n a_{i,j}x_j=\rho(A)x_i$ for all $i$;\\
\item[{\rm (b)}] either $x_i =\frac{1}{n}$ for all $i$,  or for each $i$ there is $c_i$ such that $c_i=a_{i,j}$ for all $j$.
\end{itemize}
\end{itemize}
\end{lemma}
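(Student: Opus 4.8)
The plan is to fix a row index $i$ and treat parts (i) and (ii) as a row-by-row application of the summation identity that underlies Chebyshev's inequality. Using $\sum_{j=1}^n x_j=1$, I would first record the algebraic identity
$$n\sum_{j=1}^n a_{i,j}x_j-\sum_{j=1}^n a_{i,j}=n\sum_{j=1}^n a_{i,j}x_j-\Big(\sum_{j=1}^n a_{i,j}\Big)\Big(\sum_{k=1}^n x_k\Big)=\frac12\sum_{j=1}^n\sum_{k=1}^n (a_{i,j}-a_{i,k})(x_j-x_k),$$
which is obtained by expanding the double sum. Because $x$ is a Perron eigenvector, the left-hand side equals $n\rho(A)x_i-\sum_{j=1}^n a_{i,j}$, so each of the claimed inequalities reduces to controlling the sign of the double sum.

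For part (i), the hypothesis ``$x_k<x_j$ implies $a_{i,k}\le a_{i,j}$'' means precisely that $a_{i,j}-a_{i,k}$ and $x_j-x_k$ always share the same sign (or one of them vanishes); hence every term $(a_{i,j}-a_{i,k})(x_j-x_k)$ is nonnegative, the double sum is $\geq 0$, and dividing by $n$ yields $\frac1n\sum_j a_{i,j}\le\sum_j a_{i,j}x_j=\rho(A)x_i$. Part (ii) is identical except that the reversed ordering hypothesis makes each product nonpositive, so the double sum is $\leq 0$ and the inequality flips.

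For part (iii) I would work under either ordering hypothesis, so that all $n^2$ summands of $\frac12\sum_{j,k}(a_{i,j}-a_{i,k})(x_j-x_k)$ share a common sign. The identity then shows that equality (a) holds in row $i$ if and only if this double sum vanishes, and since the terms are of one sign this forces each term to vanish: for all $i,j,k$, either $a_{i,j}=a_{i,k}$ or $x_j=x_k$. The remaining step is to convert this per-row condition into the dichotomy (b), which I would do by splitting on whether the entries of $x$ are all equal. If they are, then $\sum_j x_j=1$ forces $x_i=1/n$ for every $i$, the first alternative. If they are not, I claim each row is constant: for any columns $j,k$, if $x_j\neq x_k$ then $a_{i,j}=a_{i,k}$ immediately from the vanishing condition, while if $x_j=x_k$ I choose a column $\ell$ with $x_\ell\neq x_j$ (possible since the $x$-values are not all equal) and apply the condition to the pairs $(j,\ell)$ and $(k,\ell)$ to get $a_{i,j}=a_{i,\ell}=a_{i,k}$. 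This gives the second alternative. The converse (b)$\Rightarrow$(a) is a direct substitution: if $x_i\equiv 1/n$ then $\sum_j a_{i,j}x_j=\frac1n\sum_j a_{i,j}$ trivially, and if $a_{i,j}\equiv c_i$ then both sides equal $c_i$.

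I expect the case analysis in part (iii) to be the main obstacle, since the equality characterization (b) is a disjunction and one must argue carefully that the vanishing of every cross-term propagates a single constant value across an entire row whenever the Perron vector has at least two distinct entries. The remainder is routine bookkeeping around the fixed-sign summation identity.
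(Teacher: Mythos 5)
Your proof is correct and follows essentially the same route as the paper's: parts (i) and (ii) rest in both cases on the sign of the Chebyshev-type double sum $\sum_{j,k}(a_{i,j}-a_{i,k})(x_j-x_k)$ (you package it as an exact polarization identity, the paper as a chain of inequalities, but these are the same computation), and in part (iii) both arguments reduce (a) to the vanishing of every individual product and then split on whether the entries of $x$ are all equal. Your transitivity step through a third column $\ell$ to propagate a constant value across a row when $x$ is non-constant is a slightly cleaner variant of the paper's device of choosing argmin/argmax positions $t(i),d(i)$ aligned with the extremes of $x$, but the substance is identical.
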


\begin{proof} (i): 
The property that  $\forall 1\leq i,j,k \leq n$ $x_k< x_j$ implies $a_{i,k}\leq a_{i,j}$ is equivalent to
$(a_{i,j}-a_{i,k})(x_j-x_k)\geq 0$.  From this we obtain
\begin{equation*}
\begin{split}
\forall i\ 2n x_i\rho(A)&=\sum_{j=1}^n\sum_{k=1}^n (a_{i,j}x_j+a_{i,k}x_k)\geq \sum_{j=1}^n\sum_{k=1}^n (a_{i,j}x_k+a_{i,k}x_j)\\
&\geq 2\left(\sum_{j=1}^n a_{i,j}\right)\left(\sum_{j=1}^n x_j\right)=2\sum_{j=1}^n a_{i,j}. 
\end{split}
\end{equation*}
This implies
\begin{equation*}
\forall i\ \frac{\sum_{i=1}^n a_{i,j}}{n}\leq \sum_{i=1}^n a_{i,j} x_j=\rho(A)x_i
\end{equation*}
establishing part (i).

\noindent (ii):  The proof of this part is similar to the proof of part (i). Here we first observe that the property that 
 $\forall 1\leq i,j,k \leq n$ $x_k< x_j$ implies $a_{i,k}\geq a_{i,j}$ is equivalent to
$(a_{i,j}-a_{i,k})(x_j-x_k)\leq 0$. Using this inequality in the same way as in the proof of part (i) the opposite inequality is used, we obtain 
 \begin{equation*}
 \forall i\ \frac{\sum_{i=1}^n a_{i,j}}{n}\geq \sum_{i=1}^n a_{i,j}x_j=\rho(A).
 \end{equation*} 
 establishing part (ii).


\noindent (iii):
To establish (3) (a) implies (b) assume $\frac{\sum_{i=1}^n a_{i,j}}{n} =\sum_{i=1}^n a_{i,j}x_j=\rho(A)x_i$ and that either $\forall 1\leq i,j,k \leq n\ x_k < x_j$ implies $a_{i,k}\leq a_{i,j}$  or  $\forall 1\leq i,j,k \leq n\ x_k < x_j$ implies $a_{i,k}\geq a_{i,j}.$

\noindent In the first case for any $i,j,k$ we have that $(a_{i,j}-a_{i,k})(x_j-x_k)\geq 0$ and in the second case we have that $(a_{i,j}-a_{i,k})(x_j-x_k)\leq 0$. In the first case, if there exists $i$ such that $(a_{i,j}-a_{i,k})(x_j-x_k)> 0$ for some $j$ and $k$ then 
$\frac{\sum_{i=1}^n a_{i,j}}{n}<\sum_{i=1}^n a_{i,j}x_j=\rho(A)x_i.$ Similarly in the second case if there exists $i$ such that $(a_{i,j}-a_{i,k})(x_j-x_k)< 0$, then  $\frac{\sum_{i=1}^n a_{i,j}}{n}>\sum_{i=1}^n a_{i,j}x_j=\rho(A)x_i.$ Since none of these strict inequalities holds, we have 
\begin{equation}
\label{e:aijxequal0}
\forall i,j,k\ (a_{i,j}-a_{i,k})(x_j-x_k)= 0.
\end{equation}

For any $i=1,\ldots,n$ let $t(i)$ and $d(i)$ be defined (non-uniquely) by 
\begin{equation}
\label{e:tidi}
a_{i,t(i)}=\min_j a_{i,j},\quad a_{i,d(i)}=\max_j a_{i,j}
\end{equation}
 and suppose that $x_i=x_k$ does not hold for all $i\neq k$. Our aim is to show that then the coefficients in every row of $A$ are equal to each other. Since either $\forall i,j,k$ $x_k<x_j$ implies $a_{i,k}\leq a_{i,j}$ or $\forall i,j,k$ $x_k<x_j$ implies $a_{i,k}\geq a_{i,j}$, we can let $t(i)$ and $d(i)$ be defined in such a way that not only equalities \eqref{e:tidi} hold but also in the first case $x_{t(i)}=\min_j x_j$ and $x_{d(i)}=\max_j x_j$ and in the second case $x_{t(i)}=\max_j x_j$ and $x_{d(i)}=\min_j x_j$. In both cases \eqref{e:aijxequal0} entails that
 $(a_{i,t(i)}-a_{i,d(i)})(x_{t(i)}-x_{d(i)})=0$ and hence $a_{i,t(i)}=a_{i,d(i)}$. By \eqref{e:tidi} we obtain that all all entries in the $i$th row of $A$ are equal to each other, establishing the implication (a)$\Rightarrow$(b). 


To prove that (b) implies (a), first observe that obviously if $x_j =\frac{1}{n}$  then $\frac{\sum_{i=1}^n a_{i,j}}{n} =\sum_{i=1}^n a_{i,j}x_j=\rho(A)x_i.$
If instead for each $i$ there is $c_i$ such that $a_{i,j}=c_i$ for all $j$, then the unique Perron eigenvector $x$ with 
$\sum_{j=1}^n x_j=1$ has coordinates $x_i=c_i/\sum_{j=1} c_{j}$ for all $i$ and the Perron root is 
$\rho(A)=\sum_{j=1}^n c_j$. Indeed, we have 
$$
\sum_{j=1}^n a_{i,j}x_j=c_i\sum_{i=1}^n x_j=c_i=\sum_{j=1}^n c_j\cdot\frac{c_i}{\sum_{j=1}^n c_j}=\rho(A)x_i.
$$
In this case 
$\frac{\sum_{i=1}^n a_{i,j}}{n} = c_i=\sum_{j=1}^n a_{i,j}x_j,$
establishing (a).
\end{proof}

\section{Main results}
\label{s:main}

We begin this section by establishing the inequality between the arithmetic mean of the rows and the largest and smallest Perron roots of matrices in $\Omega(A)$.

\begin{theorem}
\label{main}
For any $A\in\R_+^{n\times n}$   
\begin{equation}
\label{e:main}
   \min_{B\in\Omega(A)}\rho(B)\leq\frac{1}{n} \sum_{i=1}^n \sum_{j=1}^n a_{i,j}\leq \max_{B\in\Omega(A)}\rho(B).
\end{equation}
\end{theorem}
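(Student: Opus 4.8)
The plan is to combine the two preliminary lemmas, treating first the case where the relevant extremal matrix is irreducible and then reducing the general case to it by a perturbation. I would begin with the elementary but crucial observation that $\Omega$ preserves row sums: every $B\in\Omega(A)$ has, in each row, the same entries as $A$ (merely permuted), so $\sum_{j=1}^n b_{i,j}=\sum_{j=1}^n a_{i,j}$ for every $i$, whence $\sum_{i,j}b_{i,j}=\sum_{i,j}a_{i,j}$. Thus the middle term of \eqref{e:main} is unchanged when $A$ is replaced by any member of $\Omega(A)$, and moreover $\Omega(B)=\Omega(A)$ for every such $B$. Since there are at most $(n!)^n$ tuples of row permutations, $\Omega(A)$ is finite and the two extrema in \eqref{e:main} are attained, say by $B^{\max}$ and $B^{\min}$; in particular $\rho(B^{\max})=\max_{B\in\Omega(B^{\max})}\rho(B)$, with the analogous identity for $B^{\min}$. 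It then suffices to prove $\tfrac1n\sum_{i,j}b^{\max}_{i,j}\le\rho(B^{\max})$ together with the reverse inequality for $B^{\min}$.

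Next I would dispose of the irreducible case, where the lemmas apply verbatim. Suppose $B^{\max}$ is irreducible (which is automatic when $A>0$). By Lemma \ref{primary tool}(i) it has a positive Perron eigenvector $x$, which I normalise so that $\sum_i x_i=1$, satisfying $x_k<x_j\Rightarrow b^{\max}_{i,k}\le b^{\max}_{i,j}$ for all $i,j,k$. Lemma \ref{Tchebychef}(i) then gives $\tfrac1n\sum_{j}b^{\max}_{i,j}\le\rho(B^{\max})x_i$ for each $i$; summing over $i$ and using $\sum_i x_i=1$ yields $\tfrac1n\sum_{i,j}b^{\max}_{i,j}\le\rho(B^{\max})$, which is the right-hand inequality of \eqref{e:main}. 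The left-hand inequality is obtained symmetrically, applying parts (ii) of Lemmas \ref{primary tool} and \ref{Tchebychef} to $B^{\min}$.

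The main obstacle is that the extremal matrices need not be irreducible: a zero row of $A$, for instance, forces every member of $\Omega(A)$ to be reducible, and the two lemmas rely on a strictly positive Perron eigenvector and on the strict Collatz--Wielandt implications of Proposition \ref{BP28}(iii)--(iv). I would remove this obstruction by a continuity argument. Let $J$ denote the all-ones matrix and set $A_\epsilon=A+\epsilon J$ for $\epsilon>0$; every member of $\Omega(A_\epsilon)$ is then strictly positive, hence irreducible, so the irreducible case applies to $A_\epsilon$ and gives both $\tfrac1n\sum_{i,j}a_{i,j}+n\epsilon\le\max_{B\in\Omega(A_\epsilon)}\rho(B)$ and $\tfrac1n\sum_{i,j}a_{i,j}+n\epsilon\ge\min_{B\in\Omega(A_\epsilon)}\rho(B)$. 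To conclude I would let $\epsilon\to0^+$: for each fixed tuple of row permutations the corresponding member of $\Omega(A_\epsilon)$, and hence its spectral radius, depends continuously on $\epsilon$, and since $\Omega$ comprises only finitely many such tuples, both $\max_{B\in\Omega(A_\epsilon)}\rho(B)$ and $\min_{B\in\Omega(A_\epsilon)}\rho(B)$ are continuous in $\epsilon$ and converge to $\max_{B\in\Omega(A)}\rho(B)$ and $\min_{B\in\Omega(A)}\rho(B)$ respectively. Passing to the limit in the two displayed inequalities then yields \eqref{e:main}; the only delicate point is the continuity of the extremal Perron roots, which is exactly where finiteness of $\Omega$ is used.
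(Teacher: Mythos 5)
Your proposal is correct and follows essentially the same route as the paper: for the positive case it combines Lemma \ref{primary tool} with Lemma \ref{Tchebychef} applied to the maximizer and minimizer over the finite set $\Omega(A)$, and it handles general $A\in\R_+^{n\times n}$ by the same $\epsilon$-perturbation $a_{i,j}+\epsilon$ followed by continuity of the Perron root as $\epsilon\to 0^+$. Your version is slightly more explicit than the paper's on two points the paper leaves implicit --- that $\Omega(B^{\max})=\Omega(A)$ (needed to invoke Lemma \ref{primary tool} at the extremal matrix) and that finiteness of the permutation tuples justifies continuity of the extremal Perron roots --- but these are elaborations, not a different argument.
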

\begin{proof}
We first assume that $A>0$ and establish $\frac{1}{n} \sum_{i=1}^n \sum_{j=1}^n a_{i,j}\leq \max_{B\in\Omega(A)}\rho(B)$
for such $A$. 
Select $C\in\Omega(A)$ such that $\rho(C)=\max_{B\in\Omega(A)}\rho(B)$. Let $x>0$ be a Perron eigenvector of $C$ such that $\sum_{i=1}^n x_i=1$. By Lemma \ref{primary tool} part (i) we have that for $1\leq i,j,k \leq n\ x_k < x_j$ implies $c_{i,k}\leq c_{i,j}$. 
Then by Lemma \ref{Tchebychef} part (i) we have  $\forall i\  \frac{1}{n x_i} \sum_{j=1}^n c_{i,j}\leq \sum_{j=1}^n c_{i,j} \frac{x_j}{x_i}=  \max_{B\in\Omega(A)}\rho(B)$ and since 
$\sum_{j=1}^n c_{i,j}=\sum_{j=1}^n a_{ij}$ for all $i$, we obtain  
$$\frac{1}{n} \sum_{i=1}^n \sum_{j=1}^n a_{i,j}\leq \sum_{i=1}^n\max_{B\in\Omega(A)}\rho(B) x_i=\max_{B\in\Omega(A)}\rho(B).$$
\\
\\
Still assuming $A>0$, we can establish $\frac{1}{n} \sum_{i=1}^n \sum_{j=1}^n a_{i,j}\geq \min_{B\in\Omega(A)}\rho(B)$ in a similar way. For this we select $C\in\Omega(A)$ such that $\rho(C)=\min_{B\in\Omega(A)}\rho(B)$ and let $x>0$ be a Perron eigenvector of $C$ such that $\sum_{i=1}^n x_i=1$. Combining Lemma \ref{primary tool} part (ii) with Lemma \ref{Tchebychef} part (ii) we obtain 
 $\forall i\  \frac{1}{n x_i} \sum_{j=1}^n c_{i,j}\geq \sum_{j=1}^n c_{i,j} \frac{x_j}{x_i}$ and hence 
 $$\frac{1}{n} \sum_{i=1}^n \sum_{j=1}^n a_{i,j}\geq \sum_{i=1}^n\min_{B\in\Omega(A)}\rho(B) x_i=\min_{B\in\Omega(A)}\rho(B).$$


Now for arbitrary $A\in\R_+^{n\times n}$ and $\epsilon>0$ we define $A^{\epsilon}=(a^{\epsilon}_{i,j})=(a_{i,j}+\epsilon)$. Then since $0<A^{\epsilon}$ we have that  $\min_{B\in\Omega(A^{\epsilon})}\rho(B)\leq\frac{1}{n}\sum_{i=1}^n \sum_{j=1}^n (a_{i,j}+\epsilon)\leq \max_{B\in\Omega(A^{\epsilon})}\rho(B).$ 
Thus by continuity of the Perron root  and letting $\epsilon$ go to zero we obtain the desired inequality for $A$. 
\end{proof}

We now establish the conditions when any of the inequalities in Theorem \ref{main} becomes an equality.
\begin{theorem}
For $0<A\in\R_+^{n\times n}$ the following are equivalent:
\begin{itemize}
	\item[{\rm (i)}]  $\frac{1}{n} \sum_{i=1}^n \sum_{j=1}^n a_{i,j}= \max_{B\in\Omega(A)}\rho(B)$.
	\item[{\rm (ii)}] Either the flat vector $x=(x_i)$ where $\forall i\ x_i=1$ is a Perron eigenvector of $A$ or 
	  there exists a non singular diagonal matrix $\exists\ D\geq 0$ such that $DA$ is a flat matrix (i.e. $\forall i,j\ d_i a_{i,j}=1$ ).
	\item[{\rm (iii)}] $\frac{1}{n} \sum_{i=1}^n \sum_{j=1}^n a_{i,j}= \min_{B\in\Omega(A)}\rho(B)$.
\end{itemize}
\end{theorem}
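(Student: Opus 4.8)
The plan is to prove the two equivalences (i)$\Leftrightarrow$(ii) and (ii)$\Leftrightarrow$(iii) separately; since condition (ii) is symmetric in ``$\max$'' and ``$\min$'', the argument for (iii)$\Leftrightarrow$(ii) will be the verbatim mirror of (i)$\Leftrightarrow$(ii), using Lemma \ref{primary tool}(ii) and Lemma \ref{Tchebychef}(ii) in place of their part-(i) counterparts. First I would unpack condition (ii): since $A>0$, the flat vector $\mathbf{1}=(1,\ldots,1)$ is a Perron eigenvector of $A$ exactly when all row sums of $A$ coincide, and the existence of a nonsingular diagonal $D\geq 0$ with $DA$ flat is exactly the statement that every row of $A$ is constant (set $a_{i,j}=c_i=1/d_i>0$). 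I would also record two facts used repeatedly: $\Omega(A)$ is a finite set, so the extremal values of $\rho$ are attained; and every $B\in\Omega(A)$ is positive, hence irreducible with a unique positive Perron eigenvector and with the same row sums as $A$.

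For (i)$\Rightarrow$(ii) I would pick $C\in\Omega(A)$ with $\rho(C)=\max_{B\in\Omega(A)}\rho(B)$ and a positive Perron eigenvector $x$ normalized by $\sum_i x_i=1$. By Lemma \ref{primary tool}(i) the pair $(C,x)$ satisfies ``$x_k<x_j \Rightarrow c_{i,k}\leq c_{i,j}$'', so Lemma \ref{Tchebychef}(i) yields $\frac{1}{n}\sum_j c_{i,j}\leq \rho(C)x_i$ for every $i$. Summing over $i$ and using $\sum_j c_{i,j}=\sum_j a_{i,j}$ and $\sum_i x_i=1$ reproduces the upper bound of Theorem \ref{main}. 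The crucial observation is that equality in (i) forces equality in each of these $n$ inequalities, i.e. $\frac{1}{n}\sum_j c_{i,j}=\sum_j c_{i,j}x_j=\rho(C)x_i$ for all $i$, which is precisely condition (a) of Lemma \ref{Tchebychef}(iii) for $C$ (whose hypothesis holds by the ordering property just noted). Hence condition (b) holds for $C$: either $x=\mathbf{1}/n$, or every row of $C$ is constant.

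It then remains to transport (b)-for-$C$ to (ii)-for-$A$. If $x=\mathbf{1}/n$, then $Cx=\rho(C)x$ gives $\sum_j c_{i,j}=\rho(C)$ for all $i$; since $A$ and $C$ have equal row sums, all row sums of $A$ equal $\rho(C)$, so $A\mathbf{1}=\rho(C)\mathbf{1}$ and $\mathbf{1}$ is a Perron eigenvector of $A$. If instead each row of $C$ is constant, then because the $i$-th rows of $A$ and $C$ carry the same multiset of entries, each row of $A$ is constant too, giving the second alternative of (ii). This establishes (i)$\Rightarrow$(ii), and the mirror argument gives (iii)$\Rightarrow$(ii).

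Finally, for (ii)$\Rightarrow$(i) and (ii)$\Rightarrow$(iii) I would argue directly from the two cases. If all row sums of $A$ equal some $r$, then every $B\in\Omega(A)$ satisfies $B\mathbf{1}=r\mathbf{1}$ with $\mathbf{1}>0$, so $\rho(B)=r$ for all $B$ and both extrema equal $r=\frac{1}{n}\sum_{i,j}a_{i,j}$. If instead $a_{i,j}=c_i$ for all $j$, then permuting equal entries changes nothing, so $\Omega(A)=\{A\}$, and the computation in the proof of Lemma \ref{Tchebychef}(iii), (b)$\Rightarrow$(a), gives $\rho(A)=\sum_j c_j=\frac{1}{n}\sum_{i,j}a_{i,j}$; again both extrema coincide with the arithmetic mean. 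The main obstacle I anticipate is purely the bookkeeping of this transport step: recognizing that the Lemma \ref{Tchebychef}(iii) dichotomy for the optimal $C$ is equivalent to the intrinsic dichotomy for $A$ phrased in (ii), and verifying the normalization $\rho(A)=\sum_j c_j$ in the constant-row case. No inequality beyond Lemmas \ref{primary tool} and \ref{Tchebychef} is needed.
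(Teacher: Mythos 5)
Your proposal is correct and follows essentially the same route as the paper: extract an extremal $C\in\Omega(A)$, combine Lemma \ref{primary tool} with Lemma \ref{Tchebychef} parts (i)/(ii) to get componentwise inequalities whose sum is the mean, use equality in (i) (resp.\ (iii)) to force componentwise equality, invoke Lemma \ref{Tchebychef}(iii) for the dichotomy, and transport it from $C$ to $A$ via equal row sums and equal row multisets. The only (immaterial) deviation is that in the constant-row case of (ii)$\Rightarrow$(i),(iii) you compute $\rho(A)=\sum_j c_j$ directly where the paper notes $\Omega(A)=\{A\}$ and cites the sandwich of Theorem \ref{main}.
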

\begin{proof}
We first establish (i)$\Rightarrow$(ii). By (i), $\frac{1}{n} \sum_{i=1}^n \sum_{j=1}^n a_{i,j}= \max_{B\in\Omega(A)}\rho(B).$
 Since the set $\Omega(A)$ is finite, there exist $C\in\Omega(A)$ and $y\in\R_+^{n\times n}$ with $\sum_{i=1}^n y_i=1$  such that $Cy=(\max_{B\in\Omega(A)}\rho(B))y$. Thus 
 $$\sum_{i=1}^n\sum_{j=1}^n c_{i,j}y_j= \left(\max_{B\in\Omega(A)}\rho(B)\right)\left(\sum_{i=1}^n y_i\right)= \max_{B\in\Omega(A)}\rho(B)= \sum_{i=1}^n \sum_{j=1}^n a_{i,j}\frac{1}{n}.$$ 
 By Lemma \ref{primary tool} part (i) $\forall i,j,k\colon  1\leq i,j,k \leq n$ we have that $y_k<y_j$ implies $c_{i,k}\leq c_{i,j}$. 
Then by Lemma \ref{Tchebychef} part (i) 
$$\forall i\  \frac{1}{n}\sum_{j=1}^n c_{i,j}\leq \sum_{j=1}^n c_{i,j}y_j=\max_{B\in\Omega(A)}\rho(B)y_i.$$ 
Since $\forall i\  \frac{1}{n}\sum_{j=1}^n a_{i,j}= \frac{1}{n}\sum_{j=1}^n c_{i,j}$ we can rewrite this as
$$\forall i\  \frac{1}{n}\sum_{j=1}^n a_{i,j}\leq \max_{B\in\Omega(A)} \rho(B)y_i.$$ 
As by (i) we have  $$\frac{1}{n} \sum_{i=1}^n \sum_{j=1}^n a_{i,j}=\max_{B\in\Omega(A)}\rho(B)=\max_{B\in\Omega(A)}\rho(B)\sum_{i=1}^n y_i,$$
if there exists $i$ such that $\frac{1}{n} \sum_{j=1}^n a_{i,j} <\max_{B\in\Omega(A)}\rho(B)y_i$ then there would have to exist $k$ such that $\frac{1}{n} \sum_{j=1}^n a_{k,j} >\max_{B\in\Omega(A)}\rho(B)y_k$, which is a contradiction, hence
$$\forall i \max_{B\in\Omega(A)}\rho(B)y_i= \frac{1}{n} \sum_{j=1}^n a_{i,j}=\sum_{j=1}^n c_{i,j}y_j= \frac{1}{n} \sum_{j=1}^n c_{i,j}.$$
Applying Lemma \ref{Tchebychef} part (iii), we obtain that either $\forall i\ y_i=1/n$ or $\forall i,j,k\ c_{i,j}=c_{i,k}$. If $\forall i\ y_i=1/n$ then $\forall i\ \frac{1}{n} \sum_{j=1}^n a_{i,j}=\max_{B\in\Omega(A)}\rho(B)\frac{1}{n}$, from which it follows that  flat vector $x=(x_i)$ where $\forall i\ x_i=1$ is a Perron eigenvector of $A.$ If $\forall i,j,k\ c_{i,j}=c_{i,k}$ then $\forall i,j,k\ a_{i,j}=c_{i,j}=c_{i,k}=a_{i,k}.$  Let $D$ be the diagonal matrix where $\forall i\ d_{i,i}=\frac{1}{a_{i,i}}$ and the rest of the entries of $D$ are $0$. Thus $DA$ is the flat matrix such that $\forall i,j\ d_i a_{i,j}=1.$

We now show (ii)$\Rightarrow$(i),(iii). Assume first that the flat vector $x=(x_i)$ where $\forall i\ x_i=1$ is a Perron eigenvector of $A$. This is equivalent to all row sums of $A$ being equal to each other. If this property holds for $A$ then it also holds for all $B\in\Omega(A)$, so the flat vector is a Perron eigenvector of any such $B$ with the same Perron root (equal to any of the row sums). Thus we have both (i) and (iii), i.e.,
\begin{equation}
\label{e:maxmineq}
\max_{B\in\Omega(A)}\rho(B)=\min_{B\in\Omega(A)}\rho(B)=\frac{1}{n} \sum_{i=1}^n \sum_{j=1}^n a_{i,j}.
\end{equation}
Now assume that there exists a non-singular diagonal matrix $D\geq 0$  such that $DA$ is a flat matrix. In this case the entries in each row of $A$ are equal to each other, implying that $\Omega(A)=\{A\}$. As the left hand side and the right hand side of~\eqref{e:main} are equal to each other, we obtain~\eqref{e:maxmineq}.

Finally, the proof of (iii)$\Rightarrow$ (ii) is similar to the proof of (i)$\Rightarrow$(ii) and will be described more briefly. By (ii), $\frac{1}{n} \sum_{i=1}^n \sum_{j=1}^n a_{i,j}= \min_{B\in\Omega(A)}\rho(B).$
 Since the set $\Omega(A)$ is finite, there exist $C\in\Omega(A)$ and $y\in\R_+^{n\times n}$ with $\sum_{i=1}^n y_i=1$  such that $Cy=(\min_{B\in\Omega(A)}\rho(B))y$. Thus 
 $$\sum_{i=1}^n\sum_{j=1}^n c_{i,j}y_j= \left(\min_{B\in\Omega(A)}\rho(B)\right)\left(\sum_{i=1}^n y_i\right)= \min_{B\in\Omega(A)}\rho(B)= \sum_{i=1}^n \sum_{j=1}^n a_{i,j}\frac{1}{n}.$$ 
 Next, combining Lemma~\ref{primary tool} part (ii) and Lemma~\ref{Tchebychef} part (ii) and using that for each $i$ the sum of the $i$th row of $A$ equals the sum of the $i$th row of $C$, we obtain 
 $$\forall i\  \frac{1}{n}\sum_{j=1}^n a_{i,j}\geq \min_{B\in\Omega(A)} \rho(B)y_i.$$  
 Using condition (iii), however, we see that the strict inequality cannot hold for any $i$ and therefore we have
 $$\forall i \min_{B\in\Omega(A)}\rho(B)y_i= \frac{1}{n} \sum_{j=1}^n a_{i,j}=\sum_{j=1}^n c_{i,j}y_j= \frac{1}{n} \sum_{j=1}^n c_{i,j}.$$
Condition (ii) then follows by applying Lemma~\ref{Tchebychef} part (iii) (see the end of the proof of (i)$\Rightarrow$(ii) written above.) \end{proof}  
 \if{
 By Lemma \ref{primary tool} part (i) $\forall i,j,k\colon  1\leq i,j,k \leq n$ we have that $y_k<y_j$ implies $c_{i,k}\leq c_{i,j}$. 
Then by Lemma \ref{Tchebychef} part (i) 
$$\forall i\  \frac{1}{n}\sum_{j=1}^n c_{i,j}\leq \sum_{j=1}^n c_{i,j}y_j=\max_{B\in\Omega(A)}\rho(B)y_i.$$ 
Since $\forall i\  \frac{1}{n}\sum_{j=1}^n a_{i,j}= \frac{1}{n}\sum_{j=1}^n c_{i,j}$ we can rewrite this as
$$\forall i\  \frac{1}{n}\sum_{j=1}^n a_{i,j}\leq \max_{B\in\Omega(A)} \rho(B)y_i.$$

Assume  $\frac{1}{n} \sum_{i=1}^n \sum_{j=1}^n a_{i,j}= \min_{B\in\Omega(A)}\rho(B)$. Since the set $\Omega(A)$ only has a finite number of members there exists $C\in\Omega(A)$ and $y\in\R_+^{n\times n}$ with $\sum_{i=1}^n y_i=1$  such that $Cy=(\min_{B\in\Omega(A)}\rho(B))y$. Thus $\sum_{i=1}^n\sum_{j=1}^n c_{i,j}y_j= (\min_{B\in\Omega(A)}\rho(B))(\sum_{i=1}^n y_i)= \min_{B\in\Omega(A)}\rho(B)= \sum_{i=1}^n \sum_{j=1}^n a_{i,j}\frac{1}{n}.$ By lemma \ref{primary tool} $\forall i,j,k,\  1\leq i,j,k \leq n\ \text{ we have that } y_k\leq y_j$ implies $c_{i,k}\geq c_{i,j}$. 
\newline
\par
Thus by lemma \ref{Tchebychef} $\forall i\  \frac{1}{n}\sum_{j=1}^n c_{i,j}\geq \sum_{j=1}^n c_{i,j}y_j=\min_{B\in\Omega(A)}\rho(B)y_i$. Since $\forall i\  \frac{1}{n}\sum_{j=1}^n a_{i,j}= \frac{1}{n}\sum_{j=1}^n c_{i,j}$ we have that $\forall i\  \frac{1}{n}\sum_{j=1}^n a_{i,j}\leq \min_{B\in\Omega(A)} \rho(B)y_i$. Since  $\frac{1}{n} \sum_{i=1}^n \sum_{j=1}^n a_{i,j}=\min_{B\in\Omega(A)}\rho(B)=\min_{B\in\Omega(A)}\rho(B)\sum_{i=1}^n y_i$ if there exists $i$ such that $\frac{1}{n} \sum_{j=1}^n a_{i,j} >\min_{B\in\Omega(A)}\rho(B)y_i$ then there would have to exist $k$ such that $\frac{1}{n} \sum_{j=1}^n a_{k,j} <\min_{B\in\Omega(A)}\rho(B)y_k$ which is a contradiction.
\\
\\
 Thus $\forall i \min_{B\in\Omega(A)}\rho(B)y_i= \frac{1}{n} \sum_{j=1}^n a_{i,j}=\sum_{j=1}^n c_{i,j}y_j= \frac{1}{n} \sum_{j=1}^n c_{i,j}.$ Thus by lemma \ref{Tchebychef} item (3) $\forall i\ y_i=1/n$ or $\forall i,j,k\ c_{i,j}=c_{i,k}$. If $\forall i\ y_i=1/n$ then $\forall i\ \frac{1}{n} \sum_{j=1}^n a_{i,j}=\min_{B\in\Omega(A)}\rho(B)\frac{1}{n}$ from which it follows that  flat vector $x=(x_i)$ where $\forall i\ x_i=1$ is a Perron eigenvector of $A.$ 
\\
\\
If $\forall i,j,k\ c_{i,j}=c_{i,k}$ then $\forall i,j,k\ a_{i,j}=c_{i,j}=c_{i,k}=a_{i,k}.$  Let $D$ be the diagonal matrix where $\forall i\ d_{i,i}=\frac{1}{a_{1,j}}$ then $DA$ is the flat matrix such that $\forall i,j\ d_i a_{i,j}=1.$ 
}\fi

\if{
\begin{remark}
\label{total ordering}
For any vector $v\in\R_+^n$ the coefficients of $v$ are partially ordered by $\leq$. In this paper this  ordering  is extended into a total ordering $<_\tau$ that is consistent with the usual partial ordering $\leq$. The rule used to obtain this extension is that when $v_i$ and $v_j$ have the same value that if $i<j$ then $v_i<_\tau v_j$ .
\end{remark}

\begin{lemma}
\label{total}
Select $0<A\in\R_+^{n\times n}$ such that $\rho(A)= \max_{B\in\Omega(A)}\rho(B)$ with eigenvector $x\in\R_+^n$ satisfying $Ax=\rho(A)x.$ Then there exists $B\in\Omega(A)$ such that $\rho(B)=\rho(A)$ and $\forall\ i,j,k\ 1\leq i,j,k\leq n\ x_k <_\tau x_j \text{ implies } b_{i,k}<_{\tau} b_{i,j}.$ 
\end{lemma}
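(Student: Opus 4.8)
The plan is to fix the given maximiser $A$ and its Perron eigenvector $x$ (with $x>0$, since $A>0$), and to build $B$ by re-laying the entries of each row in increasing $<_{\tau}$-order of the coordinates of $x$. Concretely, for each $i$ I would list the columns as $p_1,\dots,p_n$ with $x_{p_1}<_{\tau}x_{p_2}<_{\tau}\cdots<_{\tau}x_{p_n}$ — this is exactly where the total order $<_{\tau}$ earns its keep, making the ordering of columns unambiguous even when several coordinates of $x$ coincide — then sort the multiset of entries of the $i$th row of $A$ as $v_1\le v_2\le\cdots\le v_n$ and set $b_{i,p_\ell}=v_\ell$. By construction $B\in\Omega(A)$.

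The second step is to check $\rho(B)=\rho(A)$. Because $A$ attains $\max_{B\in\Omega(A)}\rho(B)$, Lemma~\ref{primary tool}(i) tells us that each row of $A$ is already weakly sorted in the order of $x$; hence, by the rearrangement inequality (Proposition~\ref{p:rearrange}), both the $i$th row of $A$ and that of the newly built $B$ maximise $\sum_j c_j x_j$ over all rearrangements $(c_j)$ of the entries of that row, so $\sum_j b_{i,j}x_j=\sum_j a_{i,j}x_j=\rho(A)x_i$ for every $i$, i.e. $Bx=\rho(A)x$. Since $A>0$ forces $B>0$, $B$ is irreducible; applying Proposition~\ref{BP28}(i) to $\rho(A)x\le Bx$ and Proposition~\ref{BP28}(ii) to $Bx\le\rho(A)x$ (with $x>0$) pins down $\rho(B)=\rho(A)$.

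The final and hardest step is to verify the strict implication $x_k<_{\tau}x_j\Rightarrow b_{i,k}<_{\tau}b_{i,j}$. By transitivity it suffices to compare columns that are consecutive in the $<_{\tau}$-order of $x$, i.e. to show $v_\ell<_{\tau}v_{\ell+1}$ after placement. When $v_\ell<v_{\ell+1}$ this is immediate. The delicate case is a plateau $v_\ell=v_{\ell+1}$, where $<_{\tau}$ collapses onto the index comparison $p_\ell<p_{\ell+1}$. Across columns that share a common value of $x$ I can still spend the freedom left in the construction, assigning the tied entries to those columns in increasing index order so that the tiebreak comes out correctly. I expect the real obstacle to be a plateau of equal entries straddling two \emph{distinct} $x$-levels, where $x_k<x_j$ holds strictly while $b_{i,k}=b_{i,j}$, so the desired relation degenerates into a comparison of the indices $k$ and $j$ that the construction cannot steer. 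Resolving this is the crux: I would either search for a tie-breaking rule simultaneously compatible with the $<_{\tau}$-orders of $x$ and of every row, or first establish the statement in the generic case in which the coordinates of $x$ are pairwise distinct (so every $x$-level is a singleton and no straddling plateau can arise) and then attempt to descend to the general case by perturbation — a reduction that is itself subtle, since strict order relations need not survive passage to the limit.
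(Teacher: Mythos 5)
Your first two steps are correct and essentially coincide with the paper's own construction: the paper also takes the maximiser $A$, invokes Lemma~\ref{primary tool}(i), and rearranges the entries of each row only among columns carrying equal values of $x$ (your full sort along the $\tau$-order of $x$ is a particular way of doing this), which preserves $\sum_j b_{i,j}x_j=\sum_j a_{i,j}x_j$ and hence gives $Bx=\rho(A)x$ and $\rho(B)=\rho(A)$ via Proposition~\ref{BP28}. The final step, which you flag as the crux and leave open, is a genuine gap --- and it cannot be closed, because the statement is false under the stated tie-breaking convention (ties in a vector are broken by coordinate index, the same rule for $x$ and for each row of $B$). Take $A=\begin{pmatrix}2 & 1\\ 1 & 1\end{pmatrix}$. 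The set $\Omega(A)$ has two elements, with spectral radii $(3+\sqrt{5})/2\approx 2.618$ and $1+\sqrt{2}\approx 2.414$, so $\rho(A)=\max_{B\in\Omega(A)}\rho(B)$, and the Perron vector satisfies $x_2=\tfrac{\sqrt{5}-1}{2}\,x_1<x_1$, i.e.\ $x_2<_\tau x_1$. But every $B\in\Omega(A)$ has second row $(1,1)$, so the index tie-break forces $b_{2,1}<_\tau b_{2,2}$, whereas the lemma demands $b_{2,2}<_\tau b_{2,1}$: no $B$ works. Note that this also defeats your genericity-plus-perturbation route, since the coordinates of $x$ here are already pairwise distinct; the obstruction is not a degenerate $x$ but a repeated value \emph{within a row} combined with the permutation sorting $x$ not being the identity, a case your ``straddling plateau'' diagnosis correctly anticipated. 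Your other route, choosing a friendlier tie-breaking rule, is likewise unavailable: the tie-break is fixed by the definition of $<_\tau$, not a free parameter of the proof.

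It is worth knowing how this compares with the paper's own proof: that proof defines $B$ by sorting within each level set of $x$ and then simply \emph{asserts} that $x_k<_\tau x_j$ implies $b_{i,k}<_\tau b_{i,j}$, with no argument --- that is, it silently assumes away exactly the case you isolated, and on the example above the assertion fails. (Consistently with this, the lemma and the remark defining $<_\tau$ sit in a disabled conditional block of the source and do not appear in the compiled paper; only the weak-inequality alignment statements, Lemma~\ref{primary tool} and Theorems~\ref{sufficient} and~\ref{equivalence}, are actually used.) What your construction does prove is the correct weak form: there exists $B\in\Omega(A)$ with $\rho(B)=\rho(A)$, $Bx=\rho(A)x$, and $x_k<_\tau x_j\Rightarrow b_{i,k}\leq b_{i,j}$, with the strict $\tau$-comparison failing only when $b_{i,k}=b_{i,j}$, $x_k<x_j$ and $k>j$. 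To rescue the strict statement one must build a relabelling into it --- replace $A$ by $PA$ (or $PAP^{T}$) so that the Perron vector becomes nondecreasing in the index, as is effectively done via Lemma~\ref{permutation} in Algorithm~\ref{a:max} --- after which sorting every row ascending does yield the $\tau$-monotone alignment. In short: your steps 1--2 match the paper, and your refusal to paper over step 3 identified a flaw that invalidates both the lemma and the paper's own proof of it.
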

\begin{proof}
Select $0<A\in\R_+^{n\times n}$ such that $\rho(A)= \max_{B\in\Omega(A)}\rho(B)$ with eigenvector $x\in\R_+^n$ satisfying $Ax=\rho(A)x.$ Then by lemma \ref{primary tool}  $\forall\ i,j,k\ 1\leq i,j,k\leq n\ x_k < x_j \text{ implies } a_{i,k}\leq a_{i,j}$. 
\\
\\ 
Define $B=(b_{i,j})$ where $b_{i,j} = a_{i,k}$ where  $x_k=x_j$ and $\forall x_s\leq x_k\ a_{i,s}\leq a_{i,k}$ and $\forall x_s\geq x_k\ a_{i,s}\geq a_{i,k}$ .  Then  $\forall\ i,j,k\ 1\leq i,j,k\leq n\ x_k <_{\tau} x_j \text{ implies } b_{i,k}<_{\tau} b_{i,j}$ and $\forall i\ x_i\rho(A)=\sum_{j=1}^n a_{i,j}x_j=\sum_{j=1}^n b_{i,j}x_j.$
\end{proof}
}\fi

\if{
\begin{lemma}
 Let $A\in\R_+^{n\times n}$ be  such that every row  of $A$ has at least one non zero entry.
\\
\\
 For $ C\in\Omega(A)$ with Perron vector $x\geq 0$ the condition
 $$\forall\ i,j,k\ 1\leq i,j,k\leq n: x_k < x_j\;\Rightarrow\;c_{i,k}\leq c_{i,j}$$
implies  $x>0.$
\end{lemma}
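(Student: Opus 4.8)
The plan is to argue by contradiction, exploiting the fact that the eigenvector relation annihilates certain entries in the rows indexed by the zero-support of $x$, and then to let the monotonicity hypothesis spread this vanishing across each such row. Concretely, suppose $x$ is not strictly positive and split the indices into $Z=\{k:x_k=0\}$ and $P=\{j:x_j>0\}$. Since $x$ is a nonzero Perron eigenvector, $P\neq\emptyset$, while by assumption $Z\neq\emptyset$. Recall that $C\in\Omega(A)$ merely permutes the entries of each row of $A$, so each row of $C$ has the same entries as the corresponding row of $A$; in particular, since every row of $A$ has a nonzero entry, every row of $C$ has at least one nonzero entry.

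First I would use the eigenvector equation $Cx=\rho(C)x$ on the rows indexed by $Z$. For $i\in Z$ we have $\sum_{j=1}^n c_{i,j}x_j=\rho(C)x_i=0$, and since all summands are nonnegative this forces $c_{i,j}x_j=0$ for every $j$, hence $c_{i,j}=0$ for every $j\in P$ (where $x_j>0$).

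Next I would invoke the monotonicity hypothesis to eliminate the remaining entries of these rows. Fix $i\in Z$ and any $k\in Z$, and pick any $j\in P$ (possible since $P\neq\emptyset$). Then $x_k=0<x_j$, so the hypothesis gives $c_{i,k}\leq c_{i,j}=0$, whence $c_{i,k}=0$. Combining this with the previous step yields $c_{i,j}=0$ for all $j\in Z\cup P=\{1,\ldots,n\}$, i.e.\ the entire $i$th row of $C$ vanishes. This contradicts the fact that every row of $C$ has a nonzero entry. Therefore $Z=\emptyset$ and $x>0$.

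I do not expect a serious obstacle here: the only points requiring care are the observation that $P$ is nonempty (so that a witness $j$ with $x_j>0$ is available in the third step) and the fact that the argument is insensitive to whether $\rho(C)$ is zero or positive, since $\rho(C)x_i=0$ holds for $i\in Z$ in either case. The essential mechanism is that the hypothesis, together with the eigenvector relation, does not merely constrain but completely empties the rows over the zero-support of $x$, which is incompatible with the no-zero-row assumption inherited from $A$.
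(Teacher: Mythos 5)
Your proof is correct, but it takes a genuinely different route from the paper's. The paper proves positivity by propagation: it first constructs an auxiliary matrix $U\in\Omega(A)$ whose every row and column carries an off-diagonal nonzero entry, connects an arbitrary index $j$ to some index $i$ with $x_i>0$ by a sequence $j=t_1,t_2,\dots,t_k=i$ with $u_{t_s,t_{s+1}}>0$, and then uses the monotonicity hypothesis at each step to show that the positive entry in row $t_{s}$ of $C$ must sit in a column where $x$ is positive, so that $(Cx)_{t_s}=\rho(C)x_{t_s}>0$ forces $x_{t_s}>0$, working backwards along the path. You instead argue by contradiction on the support: the eigenvector equation annihilates the columns of $P=\{j:x_j>0\}$ in every row indexed by $Z=\{k:x_k=0\}$, and the monotonicity hypothesis (with a witness $j\in P$, which exists since $x\neq 0$) then annihilates the columns of $Z$ as well, so each such row of $C$ would vanish entirely, contradicting the no-zero-row property inherited from $A$. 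Your route is shorter and avoids the combinatorial construction of $U$ altogether --- a construction the paper states somewhat loosely, since the existence of the connecting sequence requires the nonzero pattern of $U$ to be chosen suitably (e.g., supported on a full cycle), a point the paper does not spell out. The paper's argument has the merit of showing how positivity travels backwards along arcs of an admissible pattern, in the spirit of the irreducibility arguments used elsewhere in the paper; your support-splitting argument is the more economical proof of the lemma as stated, and you correctly observe that it is insensitive to whether $\rho(C)$ is zero or positive.
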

\begin{proof}
Assume $ C\in\Omega(A)$ with Perron vector $x\geq 0$ satisfies 
 $(\forall\ i,j,k\ 1\leq i,j,k\leq n: x_k < x_j\;\Rightarrow\;c_{i,k}\leq c_{i,j}$.
\\
\\
Since $x$ is a Perron vector of $C$ there exists  $i$ with  $x_i>0.$ Since every row of $A$ has a non zero entry there exists $U$ in $\Omega(A)$  such that each row and column of $U$ contains a off diagonal non zero entry. Select $j\neq i$. Then there exists a sequence $j=t_1,t_2,\cdots,t_k=i$ such that $u_{t_s,t_{s+1}}>0$ for $1\leq s <k.$ Since $x_i>0$ and $u_{t_{k-1},i}>0$ we have that for $x_{v}<x_i$ that if $c_{t_{k-1},i}=0$ that $c_{t_{k-1},v}=0$. Thus either $c_{t_{k-1},i}>0$ or there exists $s$ such that $x_i<x_s$ and $c_{t_{k-1},s}=u_{t_s,t_{s+1}}>0$. In either case $0<Cx|_{t_{k-1}}= \rho(C)x_{t_{k-1}}.$ Thus $x_{t_{k-1}}>0.$
\\
\\
By repeating this argument for $x_{k-1},\ldots,x_{t_2}.$ we show that for $2\leq s\leq k$ that $x_{t_s}>0$ implies $x_{t_{s-1}}=0.$ Thus $x_j>0.$ Since $j$ was selected arbitrarly we conclude that  $x>0.$ 
\end{proof}
}\fi

The following result applies the rearrangement inequality  (Proposition~\ref{p:rearrange}) to yield a sufficient condition  for establishing when $\rho(A)=\max_{B\in\Omega(A)}\rho(B)$ and $\rho(A)=\min_{B\in\Omega(A)}\rho(B)$.
\begin{theorem}
\label{sufficient}
Let $A\in\R_+^{n\times n}$  and $0\leq x\in\R_+^n$ be a Perron eigenvector of $A$. Then
\begin{equation}
\label{sufmax}
\left(\forall\ i,j,k\ 1\leq i,j,k\leq n: x_k < x_j\;\Rightarrow\; a_{i,k}\leq a_{i,j}\right)\Longrightarrow \rho(A)=\max_{B\in\Omega(A)}\rho(B) 
\end{equation}
\begin{equation}
\label{sufmin}
  \left(\forall\ i,j,k\ 1\leq i,j,k\leq n: x_k < x_j\;\Rightarrow\;a_{i,k}\geq a_{i,j}\right)\Longrightarrow\rho(A)=\min_{B\in\Omega(A)}\rho(B) 
\end{equation}
\end{theorem}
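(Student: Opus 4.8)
The plan is to prove each implication by reducing it to a one-sided comparison over the whole set $\Omega(A)$. For \eqref{sufmax} it suffices to show $\rho(B)\le\rho(A)$ for every $B\in\Omega(A)$, and for \eqref{sufmin} that $\rho(B)\ge\rho(A)$ for every $B\in\Omega(A)$; since $A\in\Omega(A)$, the reverse comparison is automatic, and the claimed equality with $\max$ (resp. $\min$) follows at once. So the whole content is to pin down the direction of the comparison, and this is where the ordering hypothesis on the rows enters.

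The common first step is a rearrangement argument applied row by row. Fix $B\in\Omega(A)$ and a row index $i$; the entries $(b_{i,j})_j$ are a permutation of $(a_{i,j})_j$ and hence share the same multiset of values. The hypothesis of \eqref{sufmax}, namely $x_k<x_j\Rightarrow a_{i,k}\le a_{i,j}$, says precisely that, after the columns are reordered so that $x$ is non-decreasing, the row $(a_{i,j})_j$ is arranged co-monotonically with $x$; by Proposition~\ref{p:rearrange} this arrangement maximizes $\sum_j a_{i,j}x_j$ over all rearrangements of the row, so $\sum_j b_{i,j}x_j\le\sum_j a_{i,j}x_j=\rho(A)x_i$ for every $i$, i.e.\ $Bx\le\rho(A)x$. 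Symmetrically, the hypothesis of \eqref{sufmin} sorts each row oppositely to $x$, making $\sum_j a_{i,j}x_j$ the minimum over rearrangements, so that $Bx\ge\rho(A)x$.

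For \eqref{sufmin} the conclusion is then immediate: from $\rho(A)x\le Bx$ with $x\ge 0$ nonzero, Proposition~\ref{BP28}(i) applied to $B$ with $\alpha=\rho(A)$ gives $\rho(A)\le\rho(B)$. Crucially, part~(i) does not require $x>0$, so this direction needs nothing more. For \eqref{sufmax} we instead have $Bx\le\rho(A)x$, and when $x>0$ Proposition~\ref{BP28}(ii) directly yields $\rho(B)\le\rho(A)$. I expect the main obstacle to be exactly the case where $x$ has zero entries, since part~(ii) requires a strictly positive vector and therefore cannot be invoked as stated.

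To clear this obstacle I would analyse the support $S=\{j:x_j>0\}$ and its complement $Z=\{j:x_j=0\}$ (note $S\neq\emptyset$ as $x\neq 0$). Writing the $i$-th equation of $Ax=\rho(A)x$ for $i\in Z$ gives $\sum_{j\in S}a_{i,j}x_j=0$, whence $a_{i,j}=0$ for all $i\in Z,\ j\in S$; feeding this back into the hypothesis of \eqref{sufmax} with $k\in Z$ and $j\in S$ (so $x_k=0<x_j$ and $a_{i,k}\le a_{i,j}=0$) forces $a_{i,k}=0$ for $i,k\in Z$ as well. Thus every row of $A$ indexed by $Z$ vanishes, and the same holds for every $B\in\Omega(A)$. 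Reordering so that $S$ precedes $Z$ makes each such $B$ block upper-triangular with a zero bottom block, giving $\rho(B)=\rho(B_{SS})$ for the principal submatrix $B_{SS}$ on $S$. Finally, restricting $Bx\le\rho(A)x$ to the rows in $S$ yields $B_{SS}x_S\le\rho(A)x_S$ with $x_S>0$, so Proposition~\ref{BP28}(ii) now applies and gives $\rho(B_{SS})\le\rho(A)$, hence $\rho(B)\le\rho(A)$, which completes the argument.
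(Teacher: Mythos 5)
Your proof is correct, and its core is the same as the paper's: exploit the hypothesis to see that each row of $A$ is paired co-monotonically (respectively anti-monotonically) with $x$, invoke the rearrangement inequality (Proposition~\ref{p:rearrange}) to get $Bx\le Ax=\rho(A)x$ for every $B\in\Omega(A)$ in the case of \eqref{sufmax}, respectively $Bx\ge\rho(A)x$ for \eqref{sufmin}, and then conclude via the Collatz--Wielandt bounds of Proposition~\ref{BP28}. Your remark that entries sitting at tied positions of $x$ can be permuted without changing $Ax$ is exactly the paper's ``without loss of generality'' alignment step. Where you genuinely go beyond the paper is the degenerate case of a Perron eigenvector with zero entries: the theorem assumes only $0\le x$, and Proposition~\ref{BP28}(ii) requires $x>0$, yet the paper's proof passes from $Bx\le\rho(A)x$ directly to $\rho(B)\le\rho(A)$ without comment --- an inference that fails for general nonnegative $x$ (for instance $B=\mathrm{diag}(1,2)$ and $x=(1,0)^T$ satisfy $Bx\le x$ while $\rho(B)=2$). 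Your support decomposition closes this gap correctly: for $i$ with $x_i=0$, the equation $(Ax)_i=\rho(A)x_i=0$ forces $a_{i,j}=0$ for all $j$ in the support $S$ of $x$, and the hypothesis of \eqref{sufmax} applied with $k\notin S$ and any $j\in S$ then annihilates the remaining entries of those rows; hence every $B\in\Omega(A)$ has zero rows off $S$, so $\rho(B)=\rho(B_{SS})$, and Proposition~\ref{BP28}(ii) legitimately applies to $B_{SS}x_S\le\rho(A)x_S$ with $x_S>0$. You are also right that \eqref{sufmin} needs no such care, since Proposition~\ref{BP28}(i) tolerates $x\ge0$ (and when $\rho(A)=0$ the conclusion $\rho(B)\ge 0$ is trivial, which covers the proposition's requirement $\alpha>0$). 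In short: same route as the paper, but your version is strictly more careful and actually establishes the theorem in the stated generality, where the paper's own argument silently assumes $x>0$ in the maximum direction.
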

\begin{proof}
Consider the condition on the left hand side of~\eqref{sufmax}. Observe that we can assume without loss of generality that $a_{i,k}\leq a_{i,j}\Leftrightarrow a_{l,k}\leq a_{l,j}$ for any two rows $i$ and $l$ of $A$. Indeed, if $x_k<x_j$ then this is the case (by the condition), and if $x_k=x_j$ then the entries $a_{i,k}$ and $a_{i,j}$ or $a_{l,k}$ and $a_{l,j}$ can be swapped without changing $Ax$, so that the modified matrix belongs to $\Omega(A)$ and has the same Perron eigenvector $x$ and the same Perron root $\rho(A)$. 
Then we can also assume without loss of generality that simultaneously $x_1\leq x_2\leq\ldots \leq x_n$ and $a_{i,1}\leq a_{i,2}\leq\ldots\leq a_{i,n}$ for all $i$. If we consider any matrix $B\in\Omega(A)$, then the rearrangement inequality implies that 
$Bx\leq Ax=\rho(A)x$ and hence $\rho(B)\leq \rho(A)$.

Similarly, to prove the sufficiency of the condition on the right hand side of \eqref{min}, we can assume without loss of generality that $a_{i,k}\geq a_{i,j}\Leftrightarrow a_{l,k}\geq a_{l,j}$ for any two rows $i$ and $l$ of $A$. Indeed, if $x_k<x_j$ then this is the case (by the condition),  and if $x_k=x_j$ then the corresponding non-aligning entries in any row can be swapped to obtain the alignment. Then we can also assume without loss of generality that simultaneously $x_1\leq x_2\leq\ldots \leq x_n$ and $a_{i,1}\geq a_{i,2}\geq\ldots\geq a_{i,n}$ for all $i$. If we consider any matrix $B\in\Omega(A)$, then the rearrangement inequality implies that 
$Bx\geq Ax=\rho(A)x$ and hence $\rho(B)\geq \rho(A)$. 
\end{proof}

The following result applies Lemma~\ref{primary tool} to show that for  irreducible matrices conditions~\eqref{sufmax} and~\eqref{sufmin} of Theorem \ref{sufficient} are necessary and sufficient for $\rho(A)=\max_{B\in\Omega(A)}\rho(B)$ or $\rho(A)=\min_{B\in\Omega(A)}\rho(B)$.

\begin{theorem}
\label{equivalence}
Let $A\in\R_+^{n\times n}$ be irreducible and $0<x\in\R_+^n$ be a Perron eigenvector of $A$. Then
\begin{equation}
\label{max}
 \rho(A)=\max_{B\in\Omega(A)}\rho(B) \Longleftrightarrow\ \left(\forall\ i,j,k\ 1\leq i,j,k\leq n: x_k < x_j\;\Rightarrow\; a_{i,k}\leq a_{i,j}\right)
\end{equation}
\begin{equation}
\label{min}
 \rho(A)=\min_{B\in\Omega(A)}\rho(B) \Longleftrightarrow\ \left(\forall\ i,j,k\ 1\leq i,j,k\leq n: x_k < x_j\;\Rightarrow\;a_{i,k}\geq a_{i,j}\right)
\end{equation}
\end{theorem}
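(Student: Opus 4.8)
The plan is to observe that this theorem is essentially the assembly of two results already proved, with one supplying each direction of each biconditional; no genuinely new argument is needed beyond checking that the hypotheses line up.

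For the two backward implications ($\Leftarrow$), I would invoke Theorem~\ref{sufficient} directly. That theorem requires only $A\in\R_+^{n\times n}$ together with a nonnegative Perron eigenvector $0\leq x$, so the present hypotheses (irreducibility of $A$ together with $0<x$) are a special case of its scope. Consequently, implication~\eqref{sufmax} yields $\rho(A)=\max_{B\in\Omega(A)}\rho(B)$ from the right-hand condition of~\eqref{max}, and implication~\eqref{sufmin} yields $\rho(A)=\min_{B\in\Omega(A)}\rho(B)$ from the right-hand condition of~\eqref{min}.

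For the two forward implications ($\Rightarrow$), I would invoke Lemma~\ref{primary tool}, whose hypotheses (that $A$ be irreducible and $x$ a Perron eigenvector) coincide exactly with those assumed here. Part~(i) states that $\rho(A)=\max_{B\in\Omega(A)}\rho(B)$ forces $x_k<x_j\Rightarrow a_{i,k}\leq a_{i,j}$, which is precisely the forward direction of~\eqref{max}; part~(ii) gives the forward direction of~\eqref{min} in the same way. Thus all four implications are obtained by citation.

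Since each implication reduces to an already-established statement, there is no substantive obstacle remaining: the real content resides in Lemma~\ref{primary tool} and Theorem~\ref{sufficient}, and the present theorem merely packages them as equivalences for the irreducible case. The only point meriting a sentence of care is the compatibility of hypotheses, namely that the assumed irreducibility guarantees (via Perron--Frobenius) that the Perron eigenvector may be taken strictly positive, so that $x$ simultaneously falls under the general $0\leq x$ of Theorem~\ref{sufficient} and satisfies the requirements of Lemma~\ref{primary tool}.
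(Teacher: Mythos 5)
Your proposal is correct and is exactly the paper's own proof: the authors likewise obtain the forward implications from Lemma~\ref{primary tool} (necessity) and the backward implications from Theorem~\ref{sufficient} (sufficiency), with the same observation that irreducibility makes the hypotheses compatible. Nothing further is needed.
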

\begin{proof}
By Lemma \ref{primary tool}, the conditions on the right hand sides of~\eqref{max} and\eqref{min} are necessary. The fact that they they are sufficient follows immediately from Theorem \ref{sufficient}.
\end{proof}

\if{
The proof establishing equivalence  \ref{min} is similar to the proof used to establish equivalence \ref{max}. By lemma \ref{primary tool} $\rho(A)=\min_{B\in\Omega(A)}\rho(B)$ implies $\forall\ i,j,k\ 1\leq i,j,k\leq n: x_k < x_j \text{ implies }a_{i,k}\geq a_{i,j}$. 
\\
\\
To establish the converse assume   $\forall\ i,j,k\ 1\leq i,j,k\leq n$ $x_k < x_j \text{implies }a_{i,k}\geq a_{i,j}$ and $\exists\ C\in\Omega(A)$ such that $\rho(A)\geq\rho(C)=\min_{B\in\Omega(A)}\rho(B)$. 
\\
\\
 Then $\exists y\in\R_+^n$ satisfying $Cy=\rho(C)y$. Hence $\forall\ i,j,k\ 1\leq i,j,k\leq n\ y_k < y_j \text{ implies }c_{i,k}\geq c_{i,j}$. Let $\beta$ be a permutation so that $\forall i,k\ i<k \text{ implies }x_{\beta(i)}\geq x_{\beta(k)} $.  Then $\forall i$ $\beta$ acting on the $i^{th}$ row of $A$ yields a row for which the components are non increasing as their index increases. Let $\gamma$ be a permutation so that $\forall i,k\ i\leq k \text{ implies }y_{\gamma(i)}\geq y_{\gamma(k)}$. Then $\forall i$ $\gamma$ acting on the $i^{th}$ row of $C$ yields a row for which the components are non increasing as their index increases. Since $C\in\Omega(A)$ it follows that for all $i$ the non increasing row formed by  action of $\beta$ on the $i^{th}$ row of $A$ is identical to the non increasing row formed by the action of $\gamma$ on the $i^{th}$ row of $C$.

Let $P_{\beta}$ be the permutation matrix associated with $\beta$ (i.e. $P_{\beta}=(p_{i,j})$ where $p_{i,j} = 0$ for $j\neq i$ and $1$ if $j=i$). Let $P_{\gamma}$ be the permutation matrix associated with $\gamma$. Then  $AP_{\beta}=CP_{\gamma}$. Thus $AP_{\beta}P_{\gamma}^{-1}=C.$ Then $\rho(A)x=Ax\leq AP_{\beta}P_{\gamma}^{-1}x=Cx$ it follows from theorem \ref{BP28} that $\rho(C)\geq \rho(A).$ Thus $\rho(C)=\rho(A)$ and hence $\rho(A)=\min_{B\in\Omega(A)}\rho(B).$ Thus equation \ref{min} is established.
 \end{proof}
 }\fi
 
\section{Solving $\max_{B\in\Omega(A)}\rho(B)$ and $\min_{B\in\Omega(A)}\rho(B)$}

Below we give two simple iterative procedures for solving $\max_{B\in\Omega(A)}\rho(B)$ and $\min_{B\in\Omega(A)}\rho(B)$. Note that the computation of the minimum and maximum spectral radius over sets more general than $\Omega(A)$ was investigated by Protasov~\cite{P} where similar iterative procedures were suggested. 

Before presenting  the iterative procedures we first establish the following lemmas. 

\begin{lemma}
\label{PAAP}
Let $A\in\R_+^{n\times n}$ and $P$ be a permutation matrix. Then $\rho(PA)=\rho(AP)$.
\end{lemma}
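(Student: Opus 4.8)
The plan is to reduce the statement to the elementary fact that similar matrices have the same spectrum, exploiting that a permutation matrix is invertible so that conjugation by it relates $PA$ and $AP$.

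First I would recall that every permutation matrix $P$ is orthogonal, hence non-singular with $P^{-1}=\tran{P}$. With this in hand, I would write down the identity
\begin{equation*}
AP = P^{-1}(PA)P,
\end{equation*}
which is immediate from $P^{-1}P=I$. This exhibits $AP$ as a similarity transform of $PA$.

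Next I would invoke the standard fact that similar matrices have identical characteristic polynomials, and therefore the same eigenvalues (with the same algebraic multiplicities). In particular the sets of moduli of eigenvalues coincide, so the spectral radii of $PA$ and $AP$ agree. Since both $PA$ and $AP$ are products of non-negative matrices, they are themselves non-negative, and by Perron--Frobenius the Perron root $\rho(\cdot)$ of each equals its spectral radius. Combining these observations yields $\rho(PA)=\rho(AP)$.

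There is essentially no hard step here; the only point requiring a word of care is to confirm that $\rho$, defined in the paper as the largest non-negative eigenvalue, coincides with the spectral radius for both products. This is guaranteed because $PA$ and $AP$ are non-negative, so that the Perron--Frobenius theorem applies and the spectral radius is attained by a non-negative eigenvalue in each case. An alternative, equally short route would be to cite that for any square $X,Y$ the products $XY$ and $YX$ share their nonzero eigenvalues, applied with $X=P$, $Y=A$; but the similarity argument above is fully self-contained and I would prefer it.
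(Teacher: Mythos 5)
Your proof is correct and is essentially the paper's own argument: the paper establishes $\rho(PA)=\rho(AP)$ by transporting eigenvectors under conjugation by $P$ (from $PAx=\alpha x$ deducing $AP(P^{-1}x)=\alpha P^{-1}x$, and conversely), which is exactly your similarity $AP=P^{-1}(PA)P$ phrased pointwise. Your extra remark that the Perron root coincides with the spectral radius for the non-negative matrices $PA$ and $AP$ is a sound (if tacit in the paper) finishing touch.
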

\begin{proof}
It is easy to see that any eigenvalue of $PA$ is an eigenvalue of $AP$ and the other way around:
\begin{equation*}
\begin{split}
PAx=\alpha x &\Rightarrow  AP(P^{-1}x)=\alpha (P^{-1}x),\\
APy=\beta y & \Rightarrow PA(Py)=\beta (Py).
\end{split}
\end{equation*}
\end{proof}

\begin{lemma}
\label{permutation}
For $A\in\R_+^{n\times n} \text{ and all permutation matrices } P$ 
$$\max_{B\in\Omega(A)}\rho(B)= \max_{B\in\Omega(PA)}\rho(B)$$ and 
$$\min_{B\in\Omega(A)}\rho(B)= \min_{B\in\Omega(PA)}\rho(B).$$
\end{lemma}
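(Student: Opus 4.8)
The plan is to reduce the statement to two elementary facts about how the operation $\Omega(\cdot)$ interacts with multiplication by a permutation matrix, together with Lemma~\ref{PAAP}. Throughout, recall that left-multiplication by a permutation matrix $P$ merely reorders the rows of a matrix, while right-multiplication reorders its columns; and that membership in $\Omega(\cdot)$ depends only on the multiset of entries in each individual row, so that $C\in\Omega(A)$ forces $\Omega(C)=\Omega(A)$.

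First I would establish the set identity
$$\Omega(PA)=\{PB:B\in\Omega(A)\}.$$
If $B\in\Omega(A)$, then the $i$th row of $PB$ is one of the rows of $B$, hence a permutation of the corresponding row of $A$, which is precisely the $i$th row of $PA$; thus $PB\in\Omega(PA)$. The reverse inclusion follows by applying the same reasoning to $P^{-1}$, which is again a permutation matrix, writing $C=P(P^{-1}C)$ with $P^{-1}C\in\Omega(A)$.

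Next I would record that right-multiplication by $P$ maps $\Omega(A)$ onto itself, i.e. $\{BP:B\in\Omega(A)\}=\Omega(A)$. Indeed $BP$ is obtained from $B$ by permuting columns, so each row of $BP$ is a rearrangement of the corresponding row of $B$, hence of $A$, giving $BP\in\Omega(A)$; surjectivity follows from $C=(CP^{-1})P$ with $CP^{-1}\in\Omega(A)$. With these two facts the conclusion is a short chain of equalities:
$$\max_{C\in\Omega(PA)}\rho(C)=\max_{B\in\Omega(A)}\rho(PB)=\max_{B\in\Omega(A)}\rho(BP)=\max_{C\in\Omega(A)}\rho(C),$$
where the first equality uses $\Omega(PA)=\{PB:B\in\Omega(A)\}$, the second is Lemma~\ref{PAAP} applied for each fixed $B$ (so that $\rho(PB)=\rho(BP)$), and the third uses that $B\mapsto BP$ ranges over all of $\Omega(A)$. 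Replacing $\max$ by $\min$ throughout gives the second claimed equality verbatim.

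The argument is essentially bookkeeping, and the only genuine content is Lemma~\ref{PAAP}. Its role is to convert the left-multiplication appearing in $PA$ (a row operation that need \emph{not} preserve $\rho$) into a right-multiplication (a column operation that keeps each matrix inside $\Omega(A)$ and hence permits re-indexing the optimum). The point one must be careful about is exactly this: since $\rho(PB)$ need not equal $\rho(B)$, one cannot match maximizers of $\Omega(A)$ and $\Omega(PA)$ directly, and it is essential to pass through the identity $\rho(PB)=\rho(BP)$ before re-indexing the maximum (or minimum).
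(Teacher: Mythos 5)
Your proof is correct and takes essentially the same approach as the paper: both decompose each element of $\Omega(PA)$ as $PC$ with $C\in\Omega(A)$, invoke Lemma~\ref{PAAP} to replace $\rho(PC)$ by $\rho(CP)$, and observe that $CP\in\Omega(A)$. The only difference is organizational --- you package the argument as one chain of equalities via explicit set identities (proving both inclusions up front), whereas the paper proves the one-sided inequalities and obtains the reverse ones by repeating the argument with $P^{-1}$.
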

\begin{proof}
Take arbitrary $B\in\Omega(PA)$. Then $B=PC$, where $C\in\Omega(A)$, and by Lemma~\ref{PAAP}
$\rho(B)=\rho(CP)$, where $CP\in\Omega(A)$. This observation implies that
$$
\max_{B\in\Omega(A)}\rho(B)\geq  \max_{B\in\Omega(PA)}\rho(B),\quad 
\min_{B\in\Omega(A)}\rho(B)\leq \min_{B\in\Omega(PA)}\rho(B).
$$
The reverse inequalities follow from a similar argument where we start with $B=\Omega(A)$ and 
represent $B=P^{-1}C$ with $C\in\Omega(PA)$. \end{proof}
\\
\begin{definition}
A $n\times n$ matrix $A$ is said to be fully indecomposable if $PAQ$ is irreducible for all permutation matrices $P$ and $Q$.
\end{definition}


\begin{algorithm}[h!]
\renewcommand{\algorithmicrequire}{\textbf{Input:}}
  \renewcommand{\algorithmicensure}{\textbf{Output:}}
  \caption{Solving $\max_{B\in\Omega(A)} \rho(B)$ \label{a:max}}
  \begin{algorithmic}[1]
\REQUIRE $A\in\R_+^{n\times n}$ with $A$ fully indecomposable.
\STATE Define matrix $C_0\in\Omega(A)$ by placing the entries in each row of $A$ in ascending order.  
\STATE Find a permutation matrix $Q\in\R_+^{n\times n}$ such that the Euclidean norms of the rows of $QC$ are in ascending order. 
\STATE $P\in\R_+^{n\times n}$ is the zero matrix,  $C:=QC_0$.
\WHILE{$P$ is not the identity matrix}
\STATE Find a Perron eigenvector $x\in\R_+^n$ of $C$
\IF  {the entries of $x$ are not in ascending order} 
\STATE Find a permutation matrix $P\in\R_+^{n\times n}$ so that entries of $Px$ 
are in ascending order. 
\ELSE 
\STATE Set $P$ to be the identity matrix.
\ENDIF
\STATE $C:=PC$, $Q:=PQ$.
\ENDWHILE
\ENSURE $C_0Q,$ $\rho(C_0Q)=\max_{B\in\Omega(A)} \rho(B)$
\end{algorithmic}
\end{algorithm}

\begin{algorithm}[h!]
\renewcommand{\algorithmicrequire}{\textbf{Input:}}
  \renewcommand{\algorithmicensure}{\textbf{Output:}}
  \caption{Solving $\min_{B\in\Omega(A)} \rho(B)$ \label{a:min}}
  \begin{algorithmic}[1]
\REQUIRE $A\in\R_+^{n\times n}$ with $A$ fully indecomposable.
\STATE Define matrix $C_0\in\Omega(A)$ by placing the entries in each row of $A$ in descending order.  
\STATE Find a permutation matrix $Q\in\R_+^{n\times n}$ such that the Euclidean norms of the row sums of $QC$ are in descending order. 
\STATE $P\in\R_+^{n\times n}$ is the zero matrix, $C:=QC_0$.
\WHILE{$P$ is not the identity matrix}
\STATE Find a Perron eigenvector $x\in\R_+^n$ of $C$
\IF  {the entries of $x$ are not in descending order} 
\STATE Find a permutation matrix $P\in\R_+^{n\times n}$ so that entries of $Px$ 
are in descending order. 
\ELSE 
\STATE Set $P$ to be the identity matrix.
\ENDIF
\STATE $C:=PC$, $Q:=PQ$.
\ENDWHILE
\ENSURE $C_0Q$, $\rho(C_0Q)=\min_{B\in\Omega(A)} \rho(B)$
\end{algorithmic}
\end{algorithm}

We now argue that Algorithm~\ref{a:max} is valid.
Observe that if in step 6 vector $x$ is not in ascending order and hence $P$ is not the identity matrix, 
then $C(Px)\geq Cx=\rho(C)x$ with at least one strict inequality, since all rows of $C$ as well as $Px$ 
are aligned together in ascending order, but this is not true about all rows of $C$ and vector $x$.  Then we obtain 
$(PC)Px\geq \rho(C)Px$ with at least one strict inequality, and  
by Proposition \ref{BP28} part (iii) $\rho(C)<\rho(PC)$. If $P$ is the identity matrix then $\forall\ i,j,k\ 1\leq i,j,k\leq n: x_k < x_j\;\Rightarrow\; c_{i,k}\leq c_{i,j}$  and by Theorem~\ref{equivalence} $\rho(C)=\max_{B\in\Omega(C)}\rho(B).$ By Lemma \ref{permutation} it follows that $\rho(C)=\max_{B\in\Omega(A)} \rho(B)$. The algorithm terminates in a finite 
number of iterations since $\rho(C)$ is strictly increasing so matrices $C$ do not repeat, and since the number of permutations is finite. Lemma~\ref{PAAP} also implies that for the final matrix $C$ we have $\rho(C)=\rho(C_0Q)$, implying that $C_0Q$ solves the problem of maximizing $\rho(B)$ over $\Omega(A)$ (while belonging to $\Omega(A)$).

\if{
\begin{algorithm}[h]
\renewcommand{\algorithmicrequire}{\textbf{Input:}}
 \renewcommand{\algorithmicensure}{\textbf{Output:}}
 \caption{Solving $\max_{B\in\Omega(A)} \rho(B)$ \label{a:max}}
 \begin{algorithmic}[1] 
\REQUIRE $A\in\R_+^{n\times n}$ with $A>0$.
\STATE Set $A^{(1)}:=A$ and $s:=1$
\WHILE{$1$}
\STATE Find a Perron eigenvector $x\in\R_+^n$ of $A^{(s)}$
\STATE Set $A^{(s+1)}$ as a matrix in $\Omega(A)$ s.t. 
$x_k<x_j\Rightarrow a_{i,k}^{(s+1)}\leq a_{i,j}^{(s+1)}$ $\forall i,k,j$
\IF{$\rho(A^{(s+1)})=\rho(A^{(s)})$}
\STATE \textbf{break}
\ELSE
\STATE $s:=s+1$
\ENDIF
\ENDWHILE
\ENSURE $A^{(s)}$ s.t. $\rho(A^{(s)})=\max_{B\in\Omega(A)} \rho(B)$
\end{algorithmic}
\end{algorithm}

\begin{algorithm}[h]
\renewcommand{\algorithmicrequire}{\textbf{Input:}}
 \renewcommand{\algorithmicensure}{\textbf{Output:}}
 \caption{Solving $\min_{B\in\Omega(A)} \rho(B)$ \label{a:min}}
 \begin{algorithmic}[1] 
\REQUIRE $A\in\R_+^{n\times n}$ with $A>0$.
\STATE Set $A^{(1)}:=A$ and $s:=1$
\WHILE{$1$}
\STATE Find a Perron eigenvector $x\in\R_+^n$ of $A^{(s)}$
\STATE Set $A^{(s+1)}$ as a matrix in $\Omega(A)$ s.t. 
$x_k<x_j\Rightarrow a_{i,k}^{(s+1)}\geq a_{i,j}^{(s+1)}$ $\forall i,k,j$
\IF{$\rho(A^{(s+1)})=\rho(A^{(s)})$}
\STATE \textbf{break}
\ELSE
\STATE $s:=s+1$
\ENDIF
\ENDWHILE
\ENSURE $A^{(s)}$ s.t. $\rho(A^{(s)})=\min_{B\in\Omega(A)} \rho(B)$
\end{algorithmic}
\end{algorithm}
}\fi

\if{
For Algorithm \ref{a:max}, in general, we have $\rho(A^{(s)})\leq\rho(A^{(s+1)})$ since $\rho(A^{(s})x=A^{(s)}x\leq A^{(s+1)}x$ by the rearrangement inequality.
The condition $\rho(A^{(s+1)})=\rho(A^{(s)})$ is equivalent to the property  $x_k<x_j\Rightarrow a_{i,k}^{(s)}\leq a_{i,j}^{(s)}$
$\forall i,k,j$ (and hence to $\rho(A^{(s)})=\max_{B\in\Omega(A)} \rho(B)$ by Theorem~\ref{equivalence}). Indeed, if this condition does not hold then  $\rho(A^{(s)})\neq\max_{B\in\Omega(A)} \rho(B)$, so this property does not hold by theorem~\ref{equivalence}. If the property does not hold, then $\rho(A^{(s)})x=A^{(s)}x\leq A^{(s+1)}x$ with a strict inequality in at least one component, thus $\rho(A^{(s)})<\rho(A^{(s+1)})$ by Proposition \ref{BP28} part (iii). Thus $\rho(A^{(s+1)})=\rho(A^{(s)})$ is a valid stopping condition ensuring the maximality of  $\rho(A^{(s)})$. The algorithm converges in a finite number of iterations since the Perron roots strictly increase until the optimality so there is no repetition of $A^{(s)}$, and since the set $\Omega(A)$ is finite.
}\fi

Algorithm~\ref{a:min} is valid for the reasons similar to those explained above for Algorithm~\ref{a:max}.
We now demonstrate the work of Algorithm~\ref{a:max} on the following small example.

\begin{example}
Consider matrix 
$$
A=
\begin{pmatrix}
2 & 5 & 2 & 2 & 5\\
6 & 6 & 2 & 3 & 1\\
7 & 3 & 5 & 5 & 3\\
3 & 3 & 4 & 6 & 8\\
2 & 4 & 2 & 5 & 5
\end{pmatrix}
$$
First we align all rows of this matrix in ascending order thus obtaining $C_0$. The Euclidian norms of the row sums if $C$ are $11,$ $12,$ $18,$ $27$ and $31$. Thus initially $Q=I$ and $C=QC_0=C_0$ with its Perron vector $x$: 
$$
C=QC_0=
\begin{pmatrix}
2 & 2 & 2 & 5 & 5\\
1 & 2 & 3 & 6 & 6\\
3 & 3 & 5 & 5 & 7\\
3 & 3 & 4 & 6 & 8\\
2 & 2 & 4 & 5 & 5
\end{pmatrix}, 
\quad 
x\approx 
\begin{pmatrix}
0.3561\\ 
0.4098\\ 
0.5091\\ 
0.5301\\ 
0.4063
\end{pmatrix}
$$
The components of $x$ are not ascending and we have:  
\begin{equation*}
P=
\begin{pmatrix}
1 & 0 & 0 & 0 & 0\\
0 & 0 & 0 & 0 & 1\\
0 & 1 & 0 & 0 & 0\\
0 & 0 & 1 & 0 & 0\\
0 & 0 & 0 & 1 & 0
\end{pmatrix}
\end{equation*}
The next while loop proceeds, since $P\neq I$. 
We compute the next matrix $C$ 
and its Perron eigenvector $x$:
\begin{equation*}
\begin{split}
C:=PQC_0=
\begin{pmatrix}
2 & 2 & 2 & 5 & 5\\
2 & 2 & 4 & 5 & 5\\
1 & 2 & 3 & 6 & 6\\
3 & 3 & 5 & 5 & 7\\
3 & 3 & 4 & 6 & 8
\end{pmatrix},\quad
x\approx 
\begin{pmatrix}
0.3595\\ 
0.3987\\ 
0.4116\\ 
0.5055\\ 
0.5355
\end{pmatrix}
\end{split}
\end{equation*}
Here, $x$ is in the ascending order. The algorithm ends and returns
\begin{equation*}
C_0PQ=
\begin{pmatrix}
2 & 2 & 5 & 5 & 2\\
1 & 3 & 6 & 6 & 2\\
3 & 5 & 5 & 7 & 3\\
3 & 4 & 6 & 8 & 3\\
2 & 4 & 5 & 5 & 2
\end{pmatrix}, \quad \rho(C_0PQ)\approx 20.9863.
\end{equation*}
\end{example}

\begin{remark}
\label{r:convergence}
We conducted a number of numerical experiments, in which we increased the matrix dimension from 5 to 200. For each dimension we generated 50 random instances of $A$ and counted the number of while loops that Algorithms~\ref{a:max} and~\ref{a:min} require before convergence. For the whole dimension range, the average number of while loops stayed  with the maximum number of loops not exceeding 3. Finding  a reasonable upper bound on the number of loops before convergence is an open problem. 
Note that Cvetkovi\'{c} and Protasov~\cite{ACVP} establish that a similar algorithm has local quadratic convergence (see \cite{ACVP}, page 19). 
\end{remark}
\if{
The following argument might help to explain the observations in Remark~\ref{r:convergence}.
For any $i$ and $s$ denote by $\theta_i^{(s)}$ be the angle between the Perron eigenvector $x^{(s)}$ in the $s$th while loop and the $i$th row of $C$, denoted by $C^{(s)}[i;1\ldots n]$. Assume that $\left\|x^{(s)}\right\|=1$ for any $s$. Then we can write 
$$\left(C^{(s)}x^{(s)}\right)_i=\left\|C^{(s)}[i;1\ldots n\right\|\|x^{(s)}\|cos(\theta_i^{(s)})=\rho(C^{(s)})x^{(s)}_i.$$ 
 To show that $\forall\ i\ $
 $\left\|C^{(s)}[i;1\ldots n\right\|\|x^{(s)}\|cos(\theta_i^{(s)})<\left\|C^{(s+1)}[i;1\ldots n\right\|\|x^{(s+1)}\|cos(\theta_i^{(s+1)})$ we consider the case that $x^{(s)}_i>x^{(s+1)}_i$ and the case that $x^{(s+1)}_i\geq x^{(s)}_i$. If  $x^{(s)}_i>x^{(s+1)}_i$ then $\frac{C^{(s)}x^{(s)}|_i}{\rho(C^{(s)})}=x^{(s)}_i<x^{(s+1)}=\frac{C^{(s+1)}y|_i}{\rho(C^{s+1})}<\frac{C^{(s+1)}y|_i}{\rho(C^{(s)})}$. Thus $C^{(s)}x|_i<C^{(s+1)}x^{(s+1)}|_i.$ If $x^{(s+1)}\geq x^{(s)}$ then $\frac{C^{(s)}x^{(s)}|_i}{x^{(s)}_i}=\rho(C^{(s)})<\rho(C^{(s+1)})=\frac{C^{(s+1)}x^{(s+1}|_i}{x^{(s+1)}_i}\leq\frac{C^{(s+1)}x^{(s+1}|_i}{x^{(s)}_i}.$ Thus $C^{(s)}x^{(s)}|_i<C^{(s+1)}x^{(s+1)}|_i$. Since $\forall\ i\ $
 $\left\|C^{(s)}[i;1\ldots n\right\|\|x^{(s)}\|=\left\|C^{(s+1)}[i;1\ldots n\right\|\|x^{(s+1)}\|$ it follows that in each iteration 
 the angle between the eigenvector and each matrix row vector must all be strictly decreasing. Since as the  dimension of the matrix increases the number of row vectors increases  our observation that the number of loops stays low over the whole dimension range should be expected. Comming up with a method to quantify this to yield a reasonable upper bound on the number of loops required is an open problem.

}\fi

\if{

Since $\frac{\left\|C[i;1\ldots n]\right\|\cos(\theta_i^{(s)})}{\rho(C)}=x^{(s)}_i$ we have that $\sum_{i=1}^n \frac{\left\|C[i;1\ldots n]\right\|^2\cos(\theta_i^{(s)})^2}{\rho(C)^2}=\sum_{i=1}^n (x_i^{(s)})^2=1.$ Thus each iteration as $\rho(C)$ increases the angles $\theta_i^{(s)}$ must in some sense over all be decreasing.  

When computing the maximum Perron root Algorithm \ref{a:max} in step 7 selects $\phi$ so that 
$\forall i \left\|C[\phi(i);1\ldots n]\right\|\frac{\cos(\theta_{\phi(i)}^{(s)})}{x^{(s)}_i}\geq \left\|C[i;1\ldots n]\right\|\frac{\cos(\theta_i^{(s)})}{x^{(s)}_i}= \rho(C)$ such that the number of indecencies with strict inequality are maximized. Algorithm \ref{a:max} terminates when no index has strict inequlity. Since the angles $\theta_{\phi(i)}^{(s)}$ are for the most part decreasing with each iteration the number of strict inequalities also tends to decrease. 

It also follows from $\frac{\left\|C[i;1\ldots n]\right\|\cos(\theta_i^{(s)})}{\rho(C)}=x^{(s)}_i$ that the order of the entries in 
$\left\{x^{(s)}_i:1\leq i\leq n\right\}$ is the same as the ordering of$\left\{\left\|C[i;1\ldots n]{\cos(\theta_i^{(s)})}\right\|:1\leq i\leq n\right\}$.
Since $$n\rho(C)=\sum_{i=1}^n \frac{\cos(\theta^{(s)}_i)}{x_i}\left\|C[i;1\ldots n]\right\|.$$  
it follows that
$$\sum_{i=1}^n \left\|C[i;1\ldots n]\right\|\frac{\cos(\theta^{(s)}_{i})}{x^{(s)}_{n+1-i}}\geq\sum_{i=1}^n \left\|C[\phi(i);1\ldots n]\right\|\frac{\cos(\theta^{(s)}_{\phi(i)})}{x^{(s)}_i}\geq n\rho(C).$$  
\newline
\newline
Deriving a reasonable upper bound on the number of while loops before convergence is an open problem.

Use Theorem \ref{main} to show that $\sum_{i=1}^n \left\|C[i;1\ldots n]\right\|\frac{\cos(\theta_i^{(1)})}{x^{(1)}_{n+1-i}}\geq n\max_{B\in\Omega(A)}\rho(B)$ !!

In our example we have after the first execution of the while loop:
$$\left\|C[1\ldots 5;1\ldots 5]\right\|= 7.87401\ 9.27362\ 10.8167\ 11.5758\ 8.60233$$
$$\cos(\theta_1)=0.918461\ \cos(\theta_2)=0.89731\ \cos(\theta_3)=0.955718\ \cos(\theta_4)=0.929953\ \cos(\theta_5)=0.959057$$
$$ x=0.356137\ 0.409781\ 0.509077\ 0.53012\ 0.406276$$
$\sum_{i=1}^n \left\|C[i;1\ldots n]\right\|\frac{\cos(\theta_i^{(1)})}{x^{(1)}_{n+1-i}}= 105.827$ and 
$\sum_{i=1}^n \frac{\cos(\theta^{(1)}_i)}{x^{(1)}_i}\left\|C[i;1\ldots n]\right\|=101.534=5*\rho(C).$

then after the first iteration we  have that $$21.164\geq \max_{B\in\Omega(A)}\rho(B)=20.9863  \geq 20.3067.$$ 
}\fi

\if{
For $0<A\in\R_+^{n\times n}$ the two procedures below begin by setting $A^{(1)}$ to $A$ and $s$ to $1$. 
\begin{center}
\textbf{Procedure for computing $\max_{B\in\Omega(A)}\rho(B)$}
\end{center}
\begin{enumerate}
  \item Find  $x\in\R_+^n$ satisfying $A^{(s)}x=\rho(A^{(s)})x.$ 
	\item Set $A^{(s+1)}$ to be the matrix in $\Omega(A)$ so $\forall\ i,j,k\ 1\leq i,j,k\leq n: x_k < x_j \text{ implies }a^{(s+1)}_{i,k}\leq a^{(s+1)}_{i,j}$. Then $A^{(s+1)}x\geq A^{(s)}x=\rho(A)x$ thus  by theorem \ref{BP28} $\rho(A^{(s+1)})\geq \rho( A^{(s)}).$
	\item While  $\rho(A^{(s+1)})>\rho( A^{(s)})$ increment $s$ by $1$ and go to step $1$. 
	\item By theorem \ref{equivalence} equation  \ref{max} $\rho(A^{(s+1)})=\max_{B\in\Omega(A)}\rho(B).$
\end{enumerate} 

\begin{center}
\textbf{Procedure for computing $\min_{B\in\Omega(A)}\rho(B)$}
\end{center}
\begin{enumerate}
\item Find  $x\in\R_+^n$ satisfying $A^{(s)}x=\rho(A^{(s)})x.$ 
	\item Set $A^{(s+1)}$ to be the matrix in $\Omega(A)$ so $\forall\ i,j,k\ 1\leq i,j,k\leq n: x_k < x_j \text{ implies }a^{(s+1)}_{i,k}\geq a^{(s+1)}_{i,j}$. Then $A^{(s+1)}x\leq A^{(s)}x=\rho(A)x$ thus  by theorem \ref{BP28} $\rho(A^{(s+1)})\leq \rho( A^{(s)}).$
	\item While  $\rho(A^{(s+1)})<\rho( A^{(s)})$ increment $s$ by $1$ and go to step $1$. 
	\item By theorem \ref{equivalence} equation \ref{min} $\rho(A^{(s+1)})=\min_{B\in\Omega(A)}\rho(B).$
\end{enumerate} 
}\fi

\if{
\begin{verbatim}
 A   
2 5 2 2 5
6 6 2 3 1
7 3 5 5 3
3 3 4 6 8
2 4 2 5 5
   C
2 2 2 5 5
1 2 3 6 6
3 3 5 5 7
3 3 4 6 8
2 2 4 5 5
    
 max_eig C   
eigenvalue= 20.3067  eigenvector = 0.356137 0.409781 0.509077 0.53012 0.406276

|| C||_i= 7.87401 9.27362 10.8167 11.5758 8.60233
theta_i= 0.918461 0.89731 0.955718 0.929953 0.959057
x_i/theta_i=0.387754 0.456677 0.532664 0.57005 0.42362
\sum || C||i theta_i/x_{n+1-i}=   102.725
\sum || C ||_i theta_i/x_i=101.534=5*\rho(C)
   /:0.356137 0.409781 0.509077 0.53012 0.406276  
Permutation when applied to the eigenvector to  it in ascending order 0 4 1 2 3
   
    C=. 0 4 1 2 3{C                     C=:PC
   C
2 2 2 5 5
2 2 4 5 5
1 2 3 6 6
3 3 5 5 7
3 3 4 6 8
   max_eig C
eigenvalue = 20.9863  0.359518 0.398744 0.411606 0.505523 0.535516
   eigenvector  =0.359518 0.398744 0.411606 0.505523 0.535516 is in ascending order
   P=I
	
	20.9863 is  maxiumum Perron root of a matrix in Omega(A).  
\end{verbatim}   
}\fi

\end{document}